\documentclass[11pt]{amsart}

\usepackage{amssymb,amsmath,enumerate,epsfig}
\usepackage{amsfonts,color}
\usepackage{a4,latexsym}
\usepackage[all]{xy}
\usepackage{multirow}

\usepackage{hyperref}
\hypersetup{pdftitle=2lect}

\newcommand{\C} {\mathbb{C}}
\newcommand{\Q} {\mathbb{Q}}

\newcommand{\F}{\mathbb{F}}
\newcommand{\Z}{\mathbb{Z}}
\newcommand{\p}{\mathfrak{p}}

\newcommand{\OO}{\mathcal{O}}

\newcommand{\PP}{\mathbb{P}}
\newcommand{\NS}{\mathop{\rm NS}}

\newcommand{\MW}{\mathop{\rm MW}}

\newcommand{\Km}{\mathop{\rm Km}}
\newcommand{\disc}{\mathop{\rm disc}}

\newcommand{\Br}{\mathop{\rm Br}\nolimits}
\newcommand{\ta}{\tilde A}
\newcommand{\td}{\tilde D}
\newcommand{\te}{\tilde E}
\newcommand{\Aut}{\mathop{\rm Aut}}

\newtheorem{Theorem}{Theorem}
\newtheorem{Proposition}[Theorem]{Proposition}
\newtheorem{Lemma}[Theorem]{Lemma}

\newtheorem{Corollary}[Theorem]{Corollary}

\theoremstyle{remark}
\newtheorem{Remark}[Theorem]{Remark}

\newtheorem{Example}[Theorem]{Example}

\theoremstyle{definition}
\newtheorem{Conjecture}[Theorem]{Conjecture}
\newtheorem{Definition}[Theorem]{Definition}
\newtheorem{Question}[Theorem]{Question}

\setcounter{tocdepth}{1}

\begin{document}

\title{Two lectures on the arithmetic of K3 surfaces}

\author{Matthias Sch\"utt}
\address{Institut f\"ur Algebraische Geometrie, Leibniz Universit\"at
  Hannover, Welfengarten 1, 30167 Hannover, Germany}
\email{schuett@math.uni-hannover.de}
\urladdr{http://www.iag.uni-hannover.de/$\sim$schuett/}

\subjclass[2010]{14J28; 11F03, 11G05, 11G15, 11G25, 11G35, 14G05, 14G15, 14G25, 14J10, 14J27}
\keywords{K3 surface, rational points, elliptic fibration, Picard number, singular K3 surface, modular form, class group}
%
%
%
%

\date{March 5, 2013}

 \begin{abstract}
In these lecture notes we review different aspects of the arithmetic of K3 surfaces. Topics include rational points, Picard number and Tate conjecture, zeta functions and modularity.
 \end{abstract}
 
 \maketitle
 
 \tableofcontents

\section{Introduction}
\label{s:intro}

K3 surfaces are central objects of study in various areas of mathematics and physics
such as algebraic, complex, and differential geometry, number theory and string theory.
Naturally they featured prominently in the Fields workshop.
These notes record two introductory lectures
on the arithmetic of K3 surfaces
with some bits of additional or supplementary material.
Limitations of space and time do not possibly allow me to do justice
to all important aspects of this area;
I apologise for everything
that may have been left out or not attributed correctly.

In brief these lecture notes aim to shed some light on the following three topics:
\begin{enumerate}
\item
rational points on K3 surfaces;
\item
 Picard numbers and the Tate conjecture for K3 surfaces;
 \item
  zeta functions and modularity for K3 surfaces
  \end{enumerate}
Our survey is initiated by a brief motivation
coming from algebraic curves
which illustrates the thematic interplay between arithmetic and geometry.

\section{Motivation: Rational points on algebraic curves}
\label{s:motiv}
  
Throughout this paper we are mostly concerned with varieties which
are complex, smooth, and projective
although many techniques that we discuss actually involve positive characteristic.
On the level of curves, 
we can equivalently consider compact Riemann surfaces.
Then there is a discrete invariant: the \emph{genus} $g$ of the compact Riemann surface,
i.e.~the number of holes or handles.
It is a non-trivial fact that this purely 
topological invariant
has an algebro-geometric counterpart:
the geometric genus measuring the dimension of the space of regular 1-forms
on the projective curve.
It should come even more surprising how the genus governs the arithmetic of algebraic curves.

To see this, we assume that the algebraic curve $C$ is defined over some number field $K$
(which the reader may just as well assume to be $\Q$).
Then it is a natural problem to investigate the set of $K$-rational points on $C$.
It turns out that the cardinality of $C(K)$
falls into three cases according to the genus of $C$:

$$
\begin{array}{c|c|l}
g(C) & \# C(K) & \text{comment}\\
\hline
0 & 0,\infty & \text{rational if } \; C(K)\neq\emptyset\\
1 & \leq \infty & \text{elliptic if } \; C(K)\neq\emptyset\\
\geq 2 & <\infty & \text{Faltings' Theorem \cite{Fa}}
\end{array}
$$

The genus 1 case is particularly rich since the $K$-rational points form an abelian group
(best visible on the model as a cubic in $\PP^2$ with
the group law that any three collinear points on the curve add up to zero).
It goes back to Mordell and Weil that this group is finitely generated.
Thanks to the group structure on elliptic curves, 
we deduce the following remarkable property of genus 1 curves $C$
over a number field $K$:
there exists some finite extension $K'/K$ 
such that the $K'$-rational points are Zariski dense on $C$.
To see this one first extends $K$ to reach a rational point on $C$
(such that $C$ becomes elliptic)
and then ensures, possibly by a further extension,
that there is a rational point of infinite order.
This concept is usually called potential density:

\begin{Definition}
Let $X$ denote a variety defined over some number field $K$.
We say that $X$ has or satisfies \emph{potential density (of rational points)}
if there exists some finite extension $K'/K$ such that the $K'$-rational points lie dense on $X$.
\end{Definition}

Note that potential density unifies algebraic curves of genus 0 and 1.
By Faltings' Theorem \cite{Fa}, however,
curves of genus greater than 1 \emph{never} satisfy potential density.
It is a common belief that similar structures should hold in higher dimension,
with the genus replaced by the \emph{Kodaira dimension} $\kappa$.
For instance, the Bombieri-Lang conjecture formulates
that on varieties of maximal Kodaira dimension (so called \emph{general type}:
$\kappa(X)=\dim(X)$)
the Zariski closure of the rational points over any number field forms a proper subvariety.
It seems worthwhile noticing that
potential density makes the arithmetic problem of rational points
into a geometric notion which only depends on the $\bar \Q$-isomorphism class
of the variety, but not on the precise model chosen.

In the next section, we discuss the problem of rational points on K3 surfaces
which can be considered as a 2-dimensional analogue of elliptic curves.
Later we will see that also other concepts such as modularity carry over
from elliptic curves to certain K3 surfaces in a decisive way.

\section{K3 surfaces and rational points}
\label{s:rat}

In essence there are two ways to extend the definition of elliptic curves to dimension 2.
Requiring a group structure leads to abelian varieties
which are fairly well understood in arithmetic and geometry.
Almost automatically they come with potential density.
On the other hand, we can impose the Calabi-Yau condition (in the strict sense);
this leads to the notion of K3 surfaces:

\begin{Definition}
A smooth projective surface $X$ is called K3 if
\[
\omega_X \cong \OO_X \;\;\; \text{ and }\;\;\; h^1(X,\OO_X)=0.
\]
\end{Definition}
In fact, all K3 surfaces can be seen to be (algebraically) simply connected, and over $\C$,
deformation equivalent (although the original argument for this went through non-algebraic K3 surfaces).
For complex K3 surfaces we record the Hodge diamond which can be computed 
easily with Noether's formula:
$$
\begin{array}{ccccc}
&& 1 &&\\
& 0 && 0 &\\
1 && 20 && 1\\
& 0 && 0 &\\
&& 1 &&
\end{array}
$$
The resulting Betti numbers also hold in positive characteristic (for $\ell$-adic \'etale cohomology, say).

We give three examples.
The first two mimic the definition of elliptic curves in 2 essentially different ways 
(which will surface again in \S\ref{s:rho=1})
while the third relates to abelian surfaces.
To ease the exposition, we limit ourselves to constructions outside characteristic 2.

\begin{Example}
\label{ex}
\begin{enumerate}
\item
Smooth quartics in $\PP^3$.
\item
Double sextics,
i.e.~double coverings $X\to\PP^2$
branched along a smooth sextic curve.
\item
Kummer surfaces $\Km(A)$
where $A$ is an abelian surface
and $\Km(A)$ is the minimal resolution of the quotient $A/\langle-1\rangle$
with 16 rational double points.
\end{enumerate}
\end{Example}

In view of the deformation equivalence,
we can also allow the quartic or sextic above to have isolated rational double points
as singularities (ADE-type)
and consider a minimal resolution which will then be K3.

In the following we will repeatedly consider Kummer surfaces of product type
where the abelian surface is isomorphic to a product of elliptic curves $E\times E'$.
Such Kummer surfaces come naturally with models as quartics or double sextics.
To see this in an elementary way, write the elliptic curves in extended Weierstrass form
\begin{eqnarray}
\label{eq:E,E'}
E: \;\; y^2 = f(x),\;\;\; E': y'^2 = f'(x')
\end{eqnarray}
with cubic polynomials $f, f'$ without multiple roots.
Then a birational model of the Kummer surface $\Km(E\times E')$ is given by
the double sextic
\begin{eqnarray}
\label{eq:Km}
\Km(E\times E'): \;\; w^2 = f(x) f'(x').
\end{eqnarray}
Similarly quartic models are derived by bringing two linear factors from the RHS (over $\bar K$)
to the LHS (multiply $w$ by these factors).
The fact that all these constructions produce indeed isomorphic K3 surfaces
relies on general surface theory (birational maps between K3 surfaces are isomorphisms).
There is one more incarnation of K3 surfaces that comes up handily on Kummer surfaces of product type:
elliptic fibrations.
Before sketching their theory in the next section,
we indicate the relevance to the question of rational points:

\begin{Theorem}[Bogomolov, Tschinkel {\cite{BT}}]\label{Thm:BT}
Let $X$ be a K3 surface over a number field. 
If $X$ has an elliptic fibration or infinite automorphism group,
then $X$ satisfies potential density.
\end{Theorem}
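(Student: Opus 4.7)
The plan is to handle the two sufficient conditions of Theorem \ref{Thm:BT} in turn, in each case producing (after replacing $K$ by a finite extension $K'$, which preserves potential density) a Zariski dense set of $K'$-rational points on $X$.

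\emph{Elliptic fibration case.} Let $\pi: X \to \PP^1$ be an elliptic fibration. After a finite extension I arrange that $\pi$ has a section, so that the generic fiber $X_\eta$ is an elliptic curve over $K(t)$ and the Mordell-Weil group $\MW(X_\eta)$ acts fibrewise on $X$ by biregular automorphisms. The strategy is then to produce, possibly after further finite extension, a multisection $C \subset X$ of geometric genus at most one together with a section $\sigma \in \MW(X_\eta)$ of infinite order whose fibrewise action moves $C$ off itself. Granted such a pair, $C$ has potentially dense rational points (by rationality in the genus~$0$ case, or by Mordell-Weil on $C$ in the genus~$1$ case), and the union $\bigcup_{n \in \Z} \sigma^n(C)(K')$ is dense in every smooth fiber (since $\sigma$ has infinite order) while its image under $\pi$ is all of $\PP^1$, which forces Zariski density on $X$. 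If $\MW(X_\eta)$ already has positive generic rank, $\sigma$ is obtained over a finite extension and a suitable rational or elliptic multisection $C$ exists on an elliptic K3 in abundance. If the generic rank is zero, one instead manufactures $\sigma$ from the fibrewise trace of a carefully chosen rational multisection $C$ and proves the trace is non-torsion in $\MW(X_\eta)$ by an intersection-theoretic or specialization argument; this non-torsion guarantee is the main technical obstacle.

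\emph{Infinite automorphism group case.} Suppose $\Aut(X_{\bar K})$ is infinite. Since the kernel of the action of $\Aut(X_{\bar K})$ on $\NS(X)$ is finite, some $g \in \Aut(X_{\bar K})$ has infinite-order action on $\NS(X)$. That action preserves the intersection form of signature $(1,\rho-1)$, hence is either parabolic (fixing a primitive isotropic class) or hyperbolic (with a Salem number as spectral radius). In the parabolic case, $g$ preserves the elliptic fibration associated to its fixed isotropic class, and after descent to a finite extension the previous case applies. In the hyperbolic case, by dynamical arguments on K3 automorphisms in the spirit of Cantat and McMullen, $g$ has positive topological entropy and the forward orbit of any suitably generic point is Zariski dense; after extending $K$ so that $g$ and a rational point $P$ avoiding the $g$-periodic locus are all defined, $\{g^n(P)\}_{n \in \Z} \subset X(K')$ provides the required dense set.

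The principal technical obstacle lies in the rank-zero subcase of the elliptic fibration setting: constructing a rational multisection whose fibrewise trace is non-torsion in the Mordell-Weil group. Once this geometric input is in hand, the combination of vertical (fibrewise Mordell-Weil) and horizontal (multisection) spread yields Zariski density on $X$ with no further difficulty.
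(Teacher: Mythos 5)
Your overall architecture in the elliptic case matches the paper's --- fibrewise translation by a non-torsion (multi)section, spread along a low-genus multisection --- but two steps as written would fail or are missing. First, you cannot ``arrange that $\pi$ has a section'' by passing to a finite extension $K'/K$: the generic fiber of a genus-one fibration on a K3 surface can have index greater than one already over $\bar K(t)$, so the obstruction to a section is geometric and survives every extension of the ground field. (This is repairable --- the Mordell--Weil group of the associated Jacobian fibration still acts on $X$ by fibrewise translation --- but the repair has to be made, and the paper deliberately does not assume the fibration is jacobian.) Second, and more seriously, the two assertions you defer as ``the main technical obstacle'' --- that rational multisections exist in abundance and that enough of them have non-torsion trace --- are the entire content of Bogomolov--Tschinkel's proof, and the mechanism that produces them works only for $\rho\leq 19$. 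The case $\rho=20$ is genuinely outside the reach of that argument; the paper, following \cite{BT}, disposes of it by quoting the Shioda--Inose result \cite{SI} that every K3 surface with $\rho=20$ has infinite automorphism group. Your sketch never confronts this case split, so as it stands the elliptic branch has a hole precisely at $\rho=20$.

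In the infinite-automorphism case you diverge from the paper: you take the orbit of a single rational point under one positive-entropy automorphism, invoking dynamics in the style of Cantat--McMullen, whereas the paper's argument is to take the orbit of a single \emph{rational curve} under the full group $\Aut(X)$ --- an infinite orbit of curves is automatically Zariski dense in a surface, and a rational curve has potentially dense rational points, so no entropy, genericity, or parabolic/hyperbolic case analysis is needed. Your version can probably be patched (the Zariski closure of an infinite point orbit is $g$-invariant, so one must exclude that it is a finite union of $g$-periodic curves, and one must produce a $K'$-point avoiding the countably many periodic points), but it imports much harder dynamical input than the statement requires. Both routes rely on the fact, which you assume implicitly, that automorphisms of a K3 surface over a number field are themselves defined over number fields because $\Aut(X)$ is discrete and finitely generated; the paper states this explicitly with references.
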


The case of infinite automorphism group naturally implies potential density;
indeed it suffices to exhibit a rational curve on $X$ whose orbit under $\Aut(X)$ is infinite.
Note that automorphisms of K3 surfaces are defined over number fields
since the automorphism group is discrete and finitely generated (see \cite[\S7 Thm.~1]{PSS}, \cite{Sterk} and also \cite[Prop.~2.1]{HSa}).
We will briefly explain the idea behind the elliptic fibrations 
after introducing the necessary background in the next section.

Apropos automorphisms, we mention as a sample of another yet completely different set of problems the question
of the distribution of rational points, and in particular their periodicity under automorphisms.
These issues lend K3 surfaces to the subject of dynamics.
For instance, in \cite{Silverman} it is proved for certain K3 surfaces with infinite automorphism group
(intersections of hypersurfaces of bidegree $(1,1)$ and $(2,2)$ in $\PP^2\times\PP^2$)
that the orbit of a rational point under the automorphism group is either finite or Zariski dense.
The key ingredient here is a new notion of canonical height.

\section{Elliptic K3 surfaces}
\label{s:ell}

An elliptic surface is a smooth projective surface $X$ together with a surjective morphism to a projective curve $C$,
\[
X \to C,
\]
such that almost all fibers are smooth curves of genus 1.
Often one assumes the existence of a section (so that all fibers are in fact elliptic curves over the base field), 
but we will not restrict to these so-called \emph{jacobian} fibrations here.
In order to rule out products, for instance,
one usually assumes that the fibration has a singular fiber.
The possible singular fibers have been classified by Kodaira over $\C$ \cite{K};
later Tate exhibited an algorithm for any perfect base field \cite{Tate}.
The reducible singular fibers consist solely of $(-2)$-curves (smooth rational curves)
whose configuration corresponds to an extended Dynkin diagram (types $\ta_n, \td_k, \te_l$).
The only irreducible singular fibers are the nodal and the cuspidal cubic.

\begin{Example}[Kummer surface of product type]
The projections from $E\times E'$ to either factor induce elliptic fibrations on $\Km(E\times E')$.
In the notation of \eqref{eq:E,E'}, they could be given by twisted Weierstrass forms
\begin{eqnarray}
\label{eq:Km2}
f(x) y^2 = f'(x')
\end{eqnarray}
where $x$ represents an affine parameter of the base curve $\PP^1$.
Visibly the fibration is isotrivial, with all smooth fibers quadratic twists of $E'$ (in particular $\bar K$-isomorphic).
The singular fibers (all of them reducible) are located at $\infty$ and the roots of $f(x)$,
i.e.~they correspond to the 2-torsion points of $E$.
Each singular fiber has Kodaira type $I_0^*$ (corresponding to $\td_4$),
a double $\PP^1$ blown up in 4 $A_1$ singularities.
\end{Example}

One advantage of elliptic surfaces is that they allow us to control  the N\'eron-Severi group $\NS$
to some extent.
Notably any two fibers are algebraically equivalent,
but components of reducible fibers contribute non-trivially to $\NS$.
We call these curves \emph{vertical} --
as opposed to the multisections which are imagined in the horizontal direction.
Clearly $\NS$ of an elliptic surface is generated by horizontal and vertical divisors;
it is a non-trivial fact, however, that once there is a section, one can generate $\NS$ exclusively
by fiber components and sections (see \cite{ShMW}).
In particular, there is a closed expression (often referred to as the Shioda-Tate formula, 
cf.~\cite[Cor.~5.3]{ShMW}) for the Picard number
\[
\rho(X)=\mbox{rank}\NS(X),
\]
involving only the reducible fibers (more precisely the number $m_v$ of components of the fiber $F_v$)
and the Mordell-Weil rank $r$:
\begin{eqnarray}
\label{eq:ST}
\rho(X) = 2 + r + \sum_{v\in C} (m_v-1).
\end{eqnarray}
Since the Picard number of an elliptic surface and its jacobian are the same,
this formula indirectly also applies to any elliptic surface without section.
Throughout the paper the Picard number should always be understood geometrically,
i.e.~over the algebraic closure of the base field
(although we consider varieties over non-closed fields).

As an illustration, potential density holds for any jacobian elliptic (K3) surface with positive Mordell-Weil rank.
To prove Theorem \ref{Thm:BT},
one is thus led to consider elliptic K3 surface with $\MW$-rank zero or no sections at all.
In brief 
Bogomolov and Tschinkel show that any elliptic K3 surface with Picard number $\rho\leq 19$ admits infinitely many suitable multisections
which are rational.
Then they continue to prove that enough of these multisections are not related to torsion points.
To finish the proof, they refer to a result by Inose and Shioda \cite{SI}
that any K3 surface with $\rho=20$ has infinite automorphism group
(derived in the framework of Shioda-Inose structures, see \S\ref{s:SI}).

We now turn to the problem
how restrictive the assumptions in Theorem \ref{Thm:BT} are.
There are at least two answers:
\[
\text{fairly restrictive \;\;\; or \;\;\; not terribly restrictive}.
\]
To justify the second answer, we mention a special feature of K3 surfaces:
elliptic fibrations are completely governed by lattice theory.
Namely, any divisor of self-intersection zero induces an elliptic fibration 
after \cite[\S3, Thm.~1]{PSS}.
Here one first applies reflections and inversions to $D$ until it becomes effective by Riemann-Roch.
Upon subtracting the base locus, the resulting linear system  induces the elliptic fibration.
One easily deduces:

\begin{Lemma}
\label{Cor:ell-K3}
Any K3 surface with Picard number $\rho\geq 5$ admits an elliptic fibration.
\end{Lemma}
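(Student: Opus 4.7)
The plan is to reduce the statement to the criterion recalled just before the lemma: on a K3 surface, any divisor $D$ with $D^2=0$ produces an elliptic fibration via \cite{PSS}. So the task is purely lattice-theoretic: starting from $\rho(X)\geq 5$, produce a nonzero class $D\in\NS(X)$ of square zero.

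First I would recall the signature of the N\'eron--Severi lattice. Since $\NS(X)$ embeds into $H^2(X,\Z)$, which for a complex K3 surface is the even unimodular lattice of signature $(3,19)$, the Hodge index theorem forces $\NS(X)$ to have signature $(1,\rho-1)$. In particular, for $\rho\geq 5$ we are dealing with an indefinite integral lattice of rank at least $5$.

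Next I would invoke Meyer's theorem on indefinite quadratic forms: every nondegenerate indefinite quadratic form over $\Q$ in at least $5$ variables is isotropic. Applied to the $\Q$-quadratic form on $\NS(X)\otimes\Q$, this yields a nonzero rational class $D_0$ with $D_0^2=0$, and after clearing denominators we obtain $D\in\NS(X)\setminus\{0\}$ with $D^2=0$.

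Finally I would feed this $D$ into the machinery from \cite[\S3, Thm.~1]{PSS} exactly as sketched before the statement: by Riemann--Roch on a K3 surface, $\chi(\OO_X(D))=2+\tfrac12 D^2=2$, so either $D$ or $-D$ is effective; after further reflections in $(-2)$-curves and subtracting the base locus, the resulting linear system has no base points and defines a morphism $X\to\PP^1$ whose generic fiber is a smooth genus-$1$ curve, i.e.\ an elliptic fibration. The only potential obstacle here is purely input-side, namely correctly invoking Meyer's theorem (which requires rank $\geq 5$ and explains where the numerical bound in the lemma comes from); once the isotropic class is in hand, the rest is a direct citation of the Piatetski-Shapiro--Shafarevich construction.
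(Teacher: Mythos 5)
Your proof is correct and follows the same route as the paper: the signature computation giving $(1,\rho-1)$, the fact that an indefinite integral quadratic form of rank at least $5$ represents zero (you name Meyer's theorem, which the paper uses implicitly), and the reduction to the Piatetski-Shapiro--Shafarevich criterion discussed just before the lemma. You merely spell out the Riemann--Roch and base-locus details that the paper delegates to that preceding discussion.
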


\begin{proof}
The intersection pairing between curves
equips $\NS$ with a non-degenerate quadratic form of signature $(1,\rho-1)$
(compatible with cup-product on $H^2(X,\Z)$).
Since any such quadratic form of rank at least 5 represents zero,
the claim follows from the discussion preceding the lemma.
\end{proof}

Thus we find that the assumptions of Theorem \ref{Thm:BT} are not terribly restrictive
in the following sense:

\begin{Corollary}
Any K3 surface of Picard number $\rho\geq 5$ over a number field
satisfies potential density.
\end{Corollary}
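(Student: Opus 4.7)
The plan is to chain Lemma~\ref{Cor:ell-K3} with Theorem~\ref{Thm:BT}. Since $\rho(X)\geq 5$ by hypothesis, the Lemma furnishes an elliptic fibration on $X_{\bar K}$, and Theorem~\ref{Thm:BT} then forces potential density. The only subtlety is that the Picard number is computed geometrically, so the fibration produced by the Lemma is a priori defined only over $\bar K$, whereas Theorem~\ref{Thm:BT} is formulated over a number field.

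To bridge this gap I would argue by Galois descent. The continuous action of $\mathrm{Gal}(\bar K/K)$ on the finitely generated lattice $\NS(X_{\bar K})$ preserves the intersection form, so it factors through a finite quotient. Consequently the isotropic class $[D]\in\NS(X_{\bar K})$ that induces the fibration $\pi$ has a stabilizer of finite index, which corresponds to a finite extension $K'/K$ over which $[D]$ is Galois-invariant; the linear system attached to (a suitable modification of) $D$ then descends to $K'$, and so does $\pi$ itself. Equivalently, one can remark directly that any morphism between $\bar K$-varieties arising from $K$-varieties is cut out by finitely many polynomial equations whose coefficients generate a finite extension of $K$; either viewpoint produces an elliptic fibration $X_{K'}\to\PP^1_{K'}$ over a number field.

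Finally, I would apply Theorem~\ref{Thm:BT} to the elliptic K3 surface $X_{K'}/K'$, yielding a further finite extension $K''/K'$ with $X(K'')$ Zariski dense in $X_{K''}$. Since $K''/K$ is finite, this is precisely potential density for $X$ over $K$.

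The main conceptual point is the descent of the elliptic fibration from $\bar K$ to a number field; but because the definition of potential density already permits enlarging the base field by any finite amount, this step costs essentially nothing. Thus the corollary really is a formal combination of the two previous results.
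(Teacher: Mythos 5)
Your proof is correct and follows exactly the route the paper intends: the corollary is stated as an immediate consequence of Lemma~\ref{Cor:ell-K3} combined with Theorem~\ref{Thm:BT}, and the paper gives no further argument. Your additional care about descending the elliptic fibration from $\bar K$ to a finite extension is a legitimate point the paper leaves implicit (and is consistent with its remark that the Galois action on $\NS$ factors through a finite group), but it does not change the substance of the argument.
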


In the opposite direction,
it has to be noted that either assumption of Theorem \ref{Thm:BT} implies $\rho>1$.
A generic K3 surface, however, has $\rho=1$.
To argue that the assumptions are \emph{fairly restrictive},
it therefore suffices to rule out that K3 surfaces over number fields
somehow happen to lie on the countably many hypersurfaces in the moduli spaces
comprising K3 surfaces with $\rho>1$.
This question will be discussed both from the theoretical and explicit view point
in the next section.

\section{Picard number one}
\label{s:rho=1}

An elliptic curve always possesses a model as a plane cubic
thanks to Riemann-Roch.
Quite opposite to this, K3 surfaces have many different incarnations.
We have seen two of them in Example \ref{ex}:
double sextics on the one hand and quartics in $\PP^3$ on the other.
While these cases certainly overlap, for instance on the Kummer surfaces (not only of product type),
they differ in an essential way.
This can be seen as follows.

By general moduli theory, both generic double sextic and  generic quartic have Picard number $\rho=1$
(for quartics, this result originally goes back to a conjecture of Noether, as proved by Tjurina).
But then the hyperplane section $H$ gives an ample divisor of self-intersection
\[
H^2 = 2 \;\; \text{ resp. }\;\; H^2 = 4.
\]
In other words, $\NS$ as a lattice equals $\Z\langle 2\rangle$ reps.~$\Z\langle 4\rangle$.
As these two lattices are not isometric, generic double sextics and quartics cannot be isomorphic.
In fact for any integer $d>0$,
there are K3 surfaces with a so-called polarization of degree $2d$
forming a 19-dimensional moduli space.
The uniform approach would be to consider these as hypersurfaces
in the 20-dimensional moduli space of all K3 surfaces (including non-algebraic ones).
Similarly, K3 surfaces of Picard number $\rho\geq 2$ (such as in Theorem \ref{Thm:BT})
lie on hypersurfaces in the moduli spaces of polarized K3 surfaces,
described by lattice polarisations (cf.~\cite{Mo}).
The solution whether all K3 surfaces over number fields might somehow happen
to lie on these hypersurfaces was given by Terasoma and Ellenberg:

\begin{Theorem}[Terasoma {\cite{Terasoma}}, Ellenberg {\cite{Ell}}]
For any integer $d>0$,
there exist $2d$-polarised K3 surfaces over $\bar\Q$ with $\rho=1$.
\end{Theorem}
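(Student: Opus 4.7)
The plan is to show that the locus of $\bar\Q$-points in the 19-dimensional moduli space $\mathcal{M}_{2d}$ of $2d$-polarised K3 surfaces with geometric Picard number $\rho=1$ is non-empty. The complementary Noether-Lefschetz locus $\mathrm{NL}\subset\mathcal{M}_{2d}$ decomposes as a countable union of proper subvarieties $\mathrm{NL}_L$, indexed by the rank-2 overlattices $L\supset\langle 2d\rangle$ into which the Picard lattice could be enlarged; each $\mathrm{NL}_L$ is a divisor, and by work of Deligne it is defined over $\bar\Q$. Since $\mathcal{M}_{2d}(\bar\Q)$ is itself countable, a purely transcendental Baire-category argument (which would give the result immediately over $\C$) cannot be used: arithmetic input is indispensable.

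I would start by fixing an explicit 1-parameter family $\pi\colon\mathcal{X}\to B$ of $2d$-polarised K3 surfaces defined over a number field $K$, with $B$ a smooth $K$-curve, so that the induced moduli map $B\to\mathcal{M}_{2d}$ is finite and the variation of Hodge structure on the primitive cohomology has Zariski-dense monodromy in the relevant orthogonal group. Concretely, a generic pencil of smooth quartics in $\PP^3$ works for $d=2$, and analogous constructions exist for general $d$. The advantage of working on a curve is that each individual $\mathrm{NL}_L$ meets $B$ in only finitely many points, so each single Noether-Lefschetz condition excludes only a finite subset of $B(\bar\Q)$.

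The crucial step is to produce a $b\in B(\bar\Q)$ that simultaneously avoids every $\mathrm{NL}_L$. One route uses reduction modulo primes of good reduction: for such a prime $\p$ of $K$, specialisation of N\'eron-Severi gives $\rho(X_b)\leq\rho((X_b)_{\bar\F_\p})$, and the latter is controlled by the Frobenius action on $\ell$-adic $H^2$ through the Weil conjectures. A more intrinsic route exploits the large monodromy of the family, combined with a Hilbert irreducibility argument: for $b\in B(K)$ outside a thin set, the Galois action on the transcendental part of $H^2(X_b,\Q_\ell)$ remains irreducible of the generic dimension, forcing $\rho(X_b)=1$.

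The essential obstacle is achieving the desired bound \emph{uniformly} over all the $\mathrm{NL}_L$ with a single choice of $b$. The reduction-mod-$p$ route must exhibit primes whose Frobenius characteristic polynomials jointly preclude any nontrivial algebraic class lifting to characteristic $0$; the monodromy route must verify that the generic Galois structure on $H^2$ survives at some $\bar\Q$-point. Terasoma's original paper carries out such an analysis via explicit period computations and a Kodaira-Spencer argument, whereas Ellenberg refines the approach using mod-$\ell$ representations. In either case, the heart of the matter is breaking the countable union of Noether-Lefschetz divisors by tying together the arithmetic of $K$-rational points with the Hodge-theoretic geometry of $\mathcal{M}_{2d}$.
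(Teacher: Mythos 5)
First, a point of order: the paper does not prove this theorem --- it is imported verbatim from Terasoma and Ellenberg, so there is no internal argument to compare yours against. Judged on its own, your outline correctly identifies the shape of the problem (the Noether--Lefschetz locus is a countable union of divisors defined over $\bar\Q$, the set $\mathcal{M}_{2d}(\bar\Q)$ is countable, so the Baire-category argument that works over $\C$ fails and genuine arithmetic input is needed), and the monodromy-plus-Hilbert-irreducibility route you sketch is indeed the strategy of both cited papers. But as written it is a plan rather than a proof: the step you yourself call ``the crucial step'' --- producing a single $b\in B(\bar\Q)$ avoiding every $\mathrm{NL}_L$ simultaneously --- is exactly where all the content lies, and you discharge it by citing Terasoma and Ellenberg. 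Moreover, ``Zariski-dense monodromy'' and ``irreducibility of the transcendental part of $H^2(X_b,\Q_\ell)$'' are not quite the right conditions: the absolute Galois group acts on $\NS(X_b\otimes\bar\Q)$ through a finite quotient, so an extra divisor class would only be fixed by an \emph{open} subgroup of the Galois image. What must be shown is that the arithmetic image on primitive $H^2$ is so large that no finite-index subgroup fixes a nonzero vector orthogonal to the polarization; Hilbert irreducibility is the tool that transfers the large geometric monodromy to a large arithmetic image at a specific $b$, and the statement has to be made at the level of open subgroups.

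Second, the reduction-mod-$\p$ alternative you offer cannot work in the form stated. For a K3 surface $b_2=22$ is even, and (as the paper explains in \S\ref{s:rho}) the non-algebraic Frobenius eigenvalues come in complex-conjugate pairs, so the upper bound for $\rho(X_\p)$ extracted from any single prime is always even, hence at least $2$; no collection of primes ``jointly'' repairs this, since each prime separately only ever certifies $\rho\le 2$. Breaking the parity barrier requires the later refinements of van Luijk (comparing discriminants of $\NS$ at two primes via the Artin--Tate formula) or Elsenhans--Jahnel, which belong to a different, explicit-example-producing technique and are not what Terasoma or Ellenberg do. I would drop that route from your writeup and commit to the Galois/monodromy argument, supplying the open-subgroup formulation and the precise Hilbert-irreducibility input for a concrete family in each polarization degree $2d$.
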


In this sense, the assumptions of Theorem \ref{Thm:BT} have to be considered as fairly restrictive.
In particular, there was no K3 surface of $\rho=1$ known to satisfy potential density
until very recently Kharzemanov announced in \cite{Ka} the existence of such  K3 surfaces.
%
In the sequel we shall discuss the first big obstacle to producing such examples:
it is very hard to exhibit explicit K3 surfaces with $\rho=1$!

\section{Computation of Picard numbers}
\label{s:rho}

The crux with the Picard number of an algebraic surface $X$ is that it is in general very hard to compute.
That is, unless the geometric genus of $X$ vanishes -- over $\C$ or if the surface lifts to 
such a surface in characteristic zero.
In that case, we have $H^{1,1}(X) = H^2(X,\C)$
so that Lefschetz' theorem returns
\begin{eqnarray}
\label{eq:Lef}
\NS(X) = H^{1,1}(X) \cap H^2(X,\Z) = H^2(X,\Z).
\end{eqnarray}
Here we trivially deduce $\rho(X)=b_2(X)$.
In case of non-zero geometric genus (or non-liftability), 
we would only be aware of the following procedure to compute the Picard number:
\begin{enumerate}
\item[1.]
in the daytime, search systematically for (independent) curves on $X$
giving lower bounds for $\rho(X)$;
\item[2.]
in the nighttime, develop upper bounds for $\rho(X)$ until upper and lower bounds match.
\end{enumerate}

We have to remark that it is unclear as of today whether either step can be implemented
in an effective way.
For instance, the daytime step requires to check on $X$ for curves of increasing degrees
which soon becomes computationally fairly expensive.
There are, however, situations where we can do better.
Notably the daytime  step becomes vacuous if we aim to prove $\rho(X)=1$.
Also if $X$ admits a jacobian elliptic fibration, then the shape of the curves to consider is very clear, as 
they can all be taken as sections where one raises the height successively (see the discussion at the end of Section
\ref{s:mod} where this plays a crucial role).

The nighttime step concerning upper bounds for $\rho$ is yet more delicate.
A priori, one has only the following estimates 
\[
\rho(X) \leq 
\begin{cases}
h^{1,1}(X) & \text{(over $\C$ by Lefschetz)}\\
b_2(X) & \text{(in any characteristic due to Igusa)}.
\end{cases}
\]
Currently there are two approaches that have been worked out and tested in detail.
The first requires the special situation where $X$ admits non-trivial {\bf automorphisms}.
These give extra information about algebraic and non-algebraic classes in $H^2(X)$.
The extreme situation consists of Fermat varieties where the big automorphism group
gives complete control over all cohomology groups (see Example \ref{ex:Fermat}). 
More generally, as soon as an automorphism
acts non-trivially on the regular 2-forms on the surface $X$, 
this can give upper bounds for $\rho(X)$. 
In his pioneering work \cite{Sh-PicV}
Shioda used this technique to exhibit an explicit quintic surface over $\Q$ with $\rho=1$
(quite surprisingly one might want to add).

The second approach towards upper bounds for $\rho$
consists in smooth {\bf specialization}, mostly to characteristic $p>0$.
Here we consider a complex surface $X$ with a model over some number field $K$ 
(or its $\p$-adic completion) with good reduction $X_\p$ at some prime $\p$.
Since intersection numbers are preserved under specialization,
we obtain an embedding of lattices
\begin{eqnarray}
\label{eq:spec}
\NS(X) \hookrightarrow \NS(X_\p).
\end{eqnarray}
Directly this gives the upper bound
\[
\rho(X) \leq \rho(X_\p).
\]
Here the big advantage is that this upper bound is (theoretically) explicitly computable
assuming the Tate conjecture (see Conj.~\ref{conj}).
Concretely $X_\p$ is equipped with the Frobenius automorphism Frob$_\p$
raising coordinates to their $q$-th powers
where $q=\#\F_\p=p^r$ is the norm of $\p$.
Then one is led to consider the induced action of Frob$_\p^*$ on $H^2_\text{\'et}(X_\p,\Q_\ell)$.
A crucial property of $\NS$ in this context is that it can always be generated by divisors
defined over some finite extension of the base field.
This implies that the absolute Galois group acts on $\NS$ through a finite group.
Embedding 
\begin{eqnarray}
\label{eq:NS}
\NS(X_\p)\hookrightarrow H^2_\text{\'et}(X_\p,\Q_\ell)
\end{eqnarray}
via the cycle class map,
we find that all eigenvalues of Frob$_\p^*$ on the image of $\NS(X_\p)$ take the shape $\zeta q$
where $\zeta$ runs through roots of unity.

\begin{Conjecture}[Tate {\cite{Tate-C}}]
\label{conj}
All eigenspaces of Frob$_\p^*$ in $H^2_\text{\'et}(X_\p,\Q_\ell)$ with eigenvalues as above
are algebraic.
\end{Conjecture}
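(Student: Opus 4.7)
The plan is to reduce the conjecture to the Tate conjecture for abelian varieties (due to Tate over finite fields, and to Faltings over number fields) via the \emph{Kuga-Satake construction}. To a complex K3 surface $X$ one associates a Kuga-Satake abelian variety $A=A_{\mathrm{KS}}(X)$, built from the even Clifford algebra of $(H^2(X,\Q),\text{cup product})$. The primitive part of $H^2(X,\Q)(1)$ embeds as a sub-Hodge structure into $H^1(A,\Q)\otimes H^1(A,\Q)$, and correspondingly one gets an inclusion of $\ell$-adic Galois representations on \'etale cohomology. Provided this inclusion is induced by an honest algebraic correspondence on $X\times A\times A$, algebraic Tate classes on $A$ transport back to the algebraic Tate classes required on $X_\p$.

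The first step is to use smooth specialisation (cf.~\eqref{eq:spec}) to reduce to the case where $X_\p$ lifts to a K3 surface $X_0$ over a number field $K$. For $p\geq 3$, Deligne's theorem on the liftability of K3 surfaces provides such a lift over an unramified extension of $\Z_p$. Next I would construct the Kuga-Satake morphism as a morphism of Shimura varieties -- from the orthogonal Shimura variety attached to $\mathrm{SO}(2,19)$ to a Siegel moduli space for $\mathrm{GSp}_{2g}$ -- and verify that it extends to a morphism of smooth integral canonical models over $\Z_{(p)}$, so that the Kuga-Satake abelian variety inherits good reduction from $X_0$. Finally I would invoke the Tate conjecture on the reduction of $A$ and descend the resulting algebraic classes through the Kuga-Satake correspondence.

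The hard part will be making the Kuga-Satake correspondence algebraic. Over $\C$ this rests on Deligne's theorem that Hodge cycles on abelian varieties are absolute Hodge (or equivalently on Andr\'e's theory of motivated cycles), which is enough to transport $\ell$-adic Tate classes. In positive characteristic, however, one needs the full integral theory of Shimura varieties of Hodge type, a quite recent development building on Kisin's integral models (Madapusi Pera, with further input from Maulik, Charles and Kim). Secondary difficulties are the case $p=2$, where both Deligne's lifting result and the Kuga-Satake construction misbehave, and the supersingular locus, where $X_\p$ may fail to lift to characteristic zero altogether and one has to supplement the argument with a separate crystalline or formal-Brauer-group analysis.
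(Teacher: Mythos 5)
There is no ``paper's own proof'' to compare against here: the statement is labelled a \emph{Conjecture}, and the surrounding text makes clear that at the time of writing it was open in general, known only in special cases (Fermat varieties, elliptic K3 surfaces by Artin--Swinnerton-Dyer \cite{ASD}, K3 surfaces of finite height by Nygaard--Ogus \cite{NO}). So the first thing to say is that you have not been asked to reproduce an existing argument; you have proposed to settle an open conjecture, and your text should be judged as such.

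As a research program your outline is the historically correct one --- the Kuga--Satake construction combined with integral canonical models of orthogonal Shimura varieties is exactly the route by which the Tate conjecture for K3 surfaces was eventually established --- but as written it is not a proof, because the step you yourself flag as ``the hard part'' is precisely where the entire content of the conjecture lives, and you do not carry it out. Concretely: Deligne's absolute-Hodge theorem lets you transport $\ell$-adic Tate classes from $X$ to the Kuga--Satake variety $A$ in characteristic zero, but the conjecture requires the \emph{reverse} transport in characteristic $p$, namely that an algebraic (Tate) class on $A_\p\times A_\p$ produces an algebraic class on $X_\p$; for that you need the Kuga--Satake correspondence to be given by an actual algebraic cycle on $X\times A\times A$ (or an unconditional substitute valid on the special fibre), and asserting that this ``rests on'' absolute Hodge cycles or on integral models of Hodge-type Shimura varieties is naming the machinery, not supplying the argument. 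Two further genuine gaps: the supersingular locus, where $X_\p$ need not lift to characteristic zero at all, falls entirely outside your lifting strategy and was in fact the last and hardest case to be resolved; and the claim that the Kuga--Satake morphism extends to smooth integral models with the required compatibilities is itself a deep theorem, not a verification. In short, your proposal correctly identifies the path but defers every decisive step to results that are neither proved nor, at the level of detail given, even precisely stated.
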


The Tate conjecture has been known, for instance, for Fermat varieties
and several kinds of K3 surfaces including elliptic ones \cite{ASD} and those of finite height \cite{NO}.
Recently, intriguing finiteness statements have been discovered to be equivalent to the Tate conjecture in \cite{LMS}.\footnote{Added in proof:  After this paper was written,
Charles \cite{Charles-Tate} and Madapusi Pera \cite{Madapusi} have announced independent
proofs of the Tate Conjecture for K3 surfaces outside characteristic $2$ (and $3$ in Charles' case).} 
At any rate, the above discussion gives an upper bound for $\rho(X_\p)$ in terms of the eigenvalues of Frob$_\p^*$ on $H^2_\text{\'et}(X_\p,\Q_\ell)$.
We shall now indicate how to compute these eigenvalues.

The reciprocal characteristic polynomial $P_2(T)$ of Frob$_\p^*$ on $H^2_\text{\'et}(X_\p,\Q_\ell)$
appears as a factor of the zeta function of $X_\p$ over $\F_\p$.
By the Weil conjecture, the zeta function can be computed by point counting
over sufficiently many finite fields $\F_{q^r}$ through Lefschetz' fixed point formula.
For instance, if $X_\p$ is a regular surface over $\F_q$ (in the sense that $b_1(X_\p)=0$),
then its zeta function takes the shape
\begin{eqnarray}
\label{eq:zeta}
\zeta(X,T) = \dfrac 1{(1-T)~ P_2(T)~ (1-q^2T)}
\end{eqnarray}
Hence
point counting up to $r=\lceil(b_2(X_\p)-1)/2\rceil$ will be sufficient 
to compute the zeta function thanks to the functional equation.
Note that this will in practice still be fairly expensive,
but there are improvements using $p$-adic cohomology \cite{AKR}.

With all these techniques at hand,
here comes the major drawback of the specialization method:
assuming the Tate conjecture,
non-algebraic eigenclasses of Frob$_\p^*$ in $H^2_\text{\'et}(X_\p,\Q_\ell)$ come in pairs,
corresponding to pairs of complex-conjugate eigenvalues which are not multiples of roots of unity by $q$
(but algebraic integers of absolute value $q$ by the Weil conjectures).
In particular this would imply 
\[
\rho(X_\p) \equiv b_2(X_\p) \mod 2.
\]
Thus,
if we want to prove that some surface $X$ has Picard number $\rho(X)$ of parity other than 
that of $b_2(X)$, one has to be more inventive.
In the next section we sketch what has been done (and what might be done)
for the prototype case of K3 surfaces with $\rho=1$.

\section{K3 surfaces of Picard number one}
\label{s:K3-1}

This section describes how to attack the computation of the Picard number of a K3 surface
over some number field.
We explain in some detail the prototype case of $\rho=1$.
The first one to exhibit an explicit K3 surface $X$ with $\rho(X)=1$ was van Luijk.

\subsection{Van Luijk's approach}
\label{ss:vL}

In  \cite{vL} van Luijk exhibited a K3 surface as  quartic $X$ over $\Q$
with two different primes $\p, \p' (=2,3)$ of good reduction
where the point counting method from the previous section gave the upper bound
\begin{eqnarray}
\label{eq:2}
\rho(X) \leq \rho(X_\p),\rho(X_{\p'}) \leq 2.
\end{eqnarray}
Assuming that $\rho(X)=2$, the embeddings of lattices
\begin{eqnarray}
\label{eq:emb}
\NS(X_{\p'}) \hookleftarrow \NS(X) \hookrightarrow \NS(X_\p)
\end{eqnarray}
would be of finite index. In particular, this would imply that the discriminants of all three lattices
(i.e.~the determinants of the Gram matrices for a basis)
would be the same up to some square factors.
This property, however, can lead to a contradiction by working out explicit basis
for both $\NS(X_\p), \NS(X_{\p'})$ 
and verifying that the intersection forms  are not compatible.

\subsection{Kloosterman's improvement}
\label{ss:Kl}

Subsequently Kloosterman noticed that it is possible to circumvent the determination of generators 
of $\NS(X_\p)$ and $\NS(X_{\p'})$.
In a similar situation in \cite{Kl-15} he instead appealed to the Artin-Tate conjecture \cite{AT}
which while equivalent to the Tate conjecture by \cite{Milne},
additionally predicts the square class of the discriminant of $\NS$:
\begin{eqnarray}
\label{eq:ATC}
\disc \NS(X_\p) \stackrel{?}{=} q \dfrac{P_2(T)}{(1-qT)^{\rho(X_\p)}}\Big |_{T=1/q} \in \Q^*/(\Q^*)^2.
\end{eqnarray}
Note that this procedure does not actually require the validity of the Tate conjecture.
For if the Tate conjecture were to be wrong for $X_\p$ in the above setting,
that is $\rho(X_\p)<2$,
then automatically $\rho(X)=\rho(X_\p)=1$ by \eqref{eq:2}
which is exactly the original claim.
On the other hand, if the Tate conjecture is valid for $X_\p$ and $X_{\p'}$,
then we read off the square classes of $\NS(X_\p)$ and $\NS(X_{\p'})$ from \eqref{eq:ATC}.
If they do not agree, then we derive the desired contradiction to the assumption $\rho(X)=2$.

\subsection{Elsenhans--Jahnel's work}

The method pioneered by van Luijk takes a substantial amount of computation time and memory
since point counting over fairly large finite fields is required 
for two suitable primes of good reduction (with no guarantee that $2$ and $3$ would work).
With a view towards double sextics (which often have bad reduction at $2$),
Elsenhans and Jahnel modified van Luijk's method in such a way
that point counting is only required at one suitable prime.
Based on work by Raynaud \cite{Ray}
 they show in \cite{EJ} that the embedding \eqref{eq:NS}
is primitive (i.e.~the cokernel is torsion-free)
 in a wide range of cases including smooth surfaces over $\Q$ with good reduction at $\p\neq 2\Z$.
(In consequence the conjectural finite index embeddings in \eqref{eq:emb} would in fact be isometries of lattices.)
In practice, this means the following:

\begin{Proposition}[Elsenhans-Jahnel]
In the above setup, assume $\p\neq 2\Z$.
If some divisor class does not lift from $X_\p$ to $X$,
then $\rho(X)<\rho(X_\p)$.
\end{Proposition}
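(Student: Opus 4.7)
\medskip

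The plan is to argue by contrapositive: assuming $\rho(X) = \rho(X_\p)$, I will show that the specialization map $\NS(X) \hookrightarrow \NS(X_\p)$ is surjective, so that every divisor class on $X_\p$ lifts to $X$. Since we always have $\rho(X)\leq \rho(X_\p)$ via the specialization embedding \eqref{eq:spec}, the two possibilities are strict inequality (the conclusion we want) or equality; ruling out lifting under equality therefore suffices.

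First I would record that under the assumption $\rho(X)=\rho(X_\p)$ the specialization embedding \eqref{eq:spec} is an inclusion of two lattices of the same rank, hence realizes $\NS(X)$ as a sublattice of finite index in $\NS(X_\p)$. The whole question is then whether this index equals $1$. Since the cokernel of a finite-index inclusion is a finite abelian group, the goal reduces to showing the cokernel has no torsion.

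The main input, which I would simply invoke, is the primitivity statement of Elsenhans and Jahnel, itself resting on Raynaud's comparison of divisors in characteristic $0$ and $p$ for smooth proper models with good reduction at a prime $\p$ not lying over $2$. Precisely, under the hypothesis $\p\neq 2\Z$ they prove that the image of $\NS(X)$ inside $\NS(X_\p)$ is a primitive sublattice, i.e.\ the quotient $\NS(X_\p)/\NS(X)$ is torsion-free. Combining this with the finite-index conclusion of the previous paragraph forces the cokernel to be zero, so the specialization map is an isometry $\NS(X)\stackrel{\sim}{\to}\NS(X_\p)$. In particular every divisor class on $X_\p$ lifts to $X$, which is the desired contrapositive.

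The hard part is of course the primitivity input, which is not something I would reprove; it uses Raynaud's analysis of finite flat group schemes together with a careful study of the behavior of the Picard scheme under specialization, and the assumption $\p\neq 2\Z$ is needed precisely to avoid pathologies of $2$-torsion in those group schemes. Given that black box, the remaining argument is the short linear-algebra observation about finite-index primitive sublattices.
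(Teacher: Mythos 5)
Your argument is correct and coincides with the paper's: the text derives the Proposition in exactly this way, invoking the Elsenhans--Jahnel/Raynaud primitivity of the specialization embedding \eqref{eq:spec} under the hypothesis $\p\neq 2\Z$, so that the finite-index inclusion forced by $\rho(X)=\rho(X_\p)$ must be an isometry and every class lifts. Nothing is missing.
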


In order to exhibit a K3 surface over some number field with $\rho=1$,
it thus suffices to find a single prime $\p$ of good reduction such that
\begin{enumerate}
\item
$\rho(X_\p)\leq 2$ by inspection of the characteristic polynomial of Frob$_\p^*$ on 
$H^2_\text{\'et}(X_\p,\Q_\ell)$ and
\item
some divisor class on $X_\p$ does not lift to $X$.
\end{enumerate}

\subsection{Outlook}

Currently van Luijk and the author are working on an arithmetic deformation technique
that would allow to construct explicit K3 surfaces with $\rho=1$ 
(and other prescribed Picard numbers)
without any point counting at all.
The overall idea is to combine the above techniques
with extra information which can be extracted from automorphisms
(see \S\ref{s:rho}).
While we will actually mainly aim at Picard numbers of quintics and beyond,
this method can be used, for instance, 
to prove that the following double sextic has $\rho(X)=1$ over $\C$:
\[
X:\;\;\; w^2 = x^5+xy^5+101y^4+1.
\]

\subsection{Feasibility}

Having exhibited explicit K3 surfaces with $\rho=1$,
we shall now come to the problem
whether the above techniques may be applied to all K3 surfaces over number fields.
That is, we ask for an algorithm to compute the Picard number of a K3 surface 
which always terminates theoretically.
This issue was taken up in a recent preprint by Charles \cite{Charles}.

In detail, it is shown using the endomorphism algebra $E$ of the Hodge structure
underlying the transcendental lattice (cf.~\eqref{eq:T})
that one cannot in general expect that there are  primes $\p$
such that $\rho(X_\p)\leq \rho(X)+1$.
However, Charles proves that there are always infinitely many primes $\p$ such that 
\[
\rho(X_\p) = \rho(X) \;\; \text{ or } \;\; \rho(X_\p) = \rho(X) + \dim_\Q E.
\]
Additionally, in the latter case, there are different discriminants turning up on the reductions.
Once one knows $E$, these facts facilitate an algorithm 
which theoretically returns the Picard number of $X$.
However, the determination of $E$ (by similar methods) seems to require the validity of
the Hodge conjecture for the self-product $X\times X$.

\section{Hasse principle for K3 surfaces}
\label{s:Hasse}

Coming back to rational points on K3 surfaces,
there are many more subtle problems to investigate.
Here we comment on recent developments concerning the Hasse principle
which in fact relate to the Picard number one problem as well.

Given a variety $X$ over a number field $K$,
one may wonder whether for the existence of a global point on $X$
 it suffices to have local points over $K_v$ for every place $v$ of $K$.
 This is called the {\bf Hasse principle},
 phrased in terms of the ad\`eles $\mathbb{A}_K$:
 \begin{eqnarray}
 \label{eq:Hasse}
 X(\mathbb{A}_K)\neq\emptyset \stackrel{?}{\Longrightarrow} X(K)\neq\emptyset.
 \end{eqnarray}
 The classical case for the Hasse principle to hold consists in conics in $\PP^2$,
 but already for cubics in $\PP^2$ it may fail by an example due to Selmer \cite{Selmer}.
 Often this failure can be explained by the Brauer group
 \[
 \Br(X) = H_\text{\'et}^2(X,\mathbb{G}_m).
 \]
In fact, via local invariants any subset $S\subseteq \Br(X)$ gives rise to an intermediate set
\[
X(K) \subseteq X(\mathbb{A}_K)^S \subseteq X(\mathbb{A}_K)
\]
which may be empty even if $X(\mathbb{A}_K)$ is not (see \cite[\S 5.2]{Sko}).
This is exactly the situation of a Brauer-Manin obstruction to the Hasse principle for $X$
as pioneered by Manin in \cite{Manin}.
In practice one specifies two subgroups
\[
\Br_0(X) \subseteq \Br_1(X) \subseteq \Br(X)
\]
as follows:
\begin{eqnarray*}
\text{constant} &&
\Br_0(X) = \mbox{im}(\Br(k) \to \Br(X))\\
\text{algebraic} && 
\Br_1(X) = \ker(\Br(X) \to \Br(X \otimes \bar K)
\end{eqnarray*}
Class field theory shows for any $S\subseteq\Br_0(X)$ 
that $X(\mathbb{A}_K)^S =X(\mathbb{A}_K)$.
Algebraic Brauer-Manin obstructions to the Hasse principle 
(and to the related concept of weak approximation, i.e.~density of $X(K)$ in the product of all $X(K_v)$)
have been studied extensively in the last 40 years (see the references in \cite{HV}).
Meanwhile it is the {\bf transcendental} elements in $\Br(X)\setminus\Br_1(X)$
that have resisted concrete realizations;
in fact, most constructions in the last decade have only impacted weak approximation
while relying in an essential way on elliptic fibrations (so that $\rho\geq 2$).
This was rectified recently by a remarkable construction due to Hassett and V\`arilly-Alvarado \cite{HV}:
they exhibit K3 surfaces with Picard number one (as double sextics over number fields)
with explicit quaternion algebras in $\Br(K(X))$ 
giving rise to a transcendental Brauer-Manin obstruction to the Hasse principle.
Their work builds on results of van Geemen on Brauer groups of K3 surfaces \cite{vG}
and extends previous results which only applied to weak approximation \cite{HVV}.

\section{Rational curves on K3 surfaces}

To close our considerations about rational points of K3 surfaces,
we briefly comment on the related topic of rational curves.
The fundamental problem is:

\begin{Question}
Does any K3 surface contain infinitely many rational curves?
\end{Question}

We have already touched upon an answer for K3 surfaces with $\rho\geq 5$:
these admit an elliptic fibration (Cor.~\ref{Cor:ell-K3})
with infinitely many rational multisections (see the discussion in Section \ref{s:ell}).
The problem has seen amazing progress recently, starting from 
\cite{BHT} and greatly extended in \cite{LL}.
The main idea is to first reduce to K3 surfaces $X$ over $\bar\Q$ and then use reduction mod $p$.
Then the odd parity of $\rho(X)$ (and the Tate conjecture) implies
the existence of additional curves on $X_\p$ -- including infinitely many rational ones
as one can show.
Then one lifts back based on arguments going back to Bogomolov and Mumford.

\begin{Theorem}[Bogomolov-Hassett-Tschinkel {\cite{BHT}}, Li-Liedtke {\cite{LL}}]
Any K3 surface over $\bar\Q$ with odd $\rho$ or $\rho\geq 5$  contains infinitely many rational curves (over $\bar\Q$).
\end{Theorem}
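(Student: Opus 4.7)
The statement naturally splits into two regimes that require quite different techniques, and the second is the substantive part.

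\textbf{The case $\rho(X)\geq 5$.} Lemma~\ref{Cor:ell-K3} immediately produces an elliptic fibration $\pi:X\to\PP^1$. Passing to its Jacobian does not alter the Picard number, so I may assume $\pi$ carries a section. One now invokes exactly the mechanism behind Theorem~\ref{Thm:BT}: Bogomolov-Tschinkel show that any elliptic K3 surface carries infinitely many irreducible rational multisections, obtained either from an infinite Mordell-Weil group or, when $\MW(\pi)$ is finite, from a more subtle deformation argument using the abundance of rational curves on a suitable base change. These yield the desired infinite family of rational curves.

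\textbf{The case $\rho(X)$ odd.} I would spread $X$ to a smooth proper model $\mX\to\mathrm{Spec}\,\OO_K[1/N]$ over a sufficiently large number field $K$. At each good prime $\p$, the specialization $\NS(X)\hookrightarrow\NS(X_\p)$ of \eqref{eq:spec} holds. Since the Tate conjecture is known for K3 surfaces (see~\cite{ASD,NO}) and $b_2=22$ is even, the parity argument from \S\ref{s:rho} forces $\rho(X_\p)$ to be even; combined with the odd parity of $\rho(X)$ this yields $\rho(X_\p)\geq\rho(X)+1$ at every good prime. Hence each $X_\p$ carries a divisor class $\alpha_\p$ not lying in the image of the specialization map. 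Adjusting $\alpha_\p$ by a large multiple of an ample class and invoking the theorem of Bogomolov--Mumford (any effective class on a K3 surface, in any characteristic, is represented by an effective sum of rational curves) produces rational curves $C_\p\subset X_\p$. To return to characteristic zero, one lifts $C_\p$ via the relative Hilbert scheme $\mathrm{Hilb}(\mX/\OO_K)$, which is proper: after base-changing to a suitable DVR, formal deformation together with algebraisation yields a curve $C\subset X_{\bar K}$ of the same Hilbert polynomial as $C_\p$, and semi-continuity of the geometric genus forces $C$ to be rational as well.

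\textbf{Main obstacle.} The real work is not producing one rational curve per prime, nor lifting any one of them individually, but ensuring that in aggregate one obtains \emph{infinitely many distinct} rational curves on $X$. A priori the lifts $C$ from different primes could all coincide. The heart of \cite{BHT,LL} consists in showing that as $\p$ ranges over a suitable infinite set of good primes, the classes $[C_\p]\in\NS(X_\p)$ cannot all arise as reductions of the classes of a fixed finite collection of rational curves on $X$; this step uses finer information on how $\NS$ and the transcendental lattice specialise, and is precisely what constrained \cite{BHT} to a partial result later completed by \cite{LL}.
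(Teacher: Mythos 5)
Your overall strategy coincides with the sketch the paper gives: the case $\rho\geq 5$ is dispatched via Lemma~\ref{Cor:ell-K3} and the multisection mechanism behind Theorem~\ref{Thm:BT}, and the odd-$\rho$ case via reduction modulo $\p$, the parity consequence of the Tate conjecture forcing $\rho(X_\p)>\rho(X)$, Bogomolov--Mumford in characteristic $p$, and lifting back to characteristic zero. You also correctly isolate one genuinely hard point, namely producing infinitely many \emph{distinct} curves upstairs.

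However, one step of your lifting argument fails as written. Semicontinuity of the geometric genus goes in the wrong direction for your purposes: in a flat family over a DVR the arithmetic genus is constant while the $\delta$-invariant can only increase under specialization, so the geometric genus of the special fibre is \emph{at most} that of the generic fibre. Knowing that $C_\p$ is rational therefore places no upper bound on the genus of a lift produced by the Hilbert scheme; a priori your $C_\p$ is merely a degeneration of curves of positive genus. This is precisely why \cite{BHT} and \cite{LL} lift not through $\mathrm{Hilb}$ but through the moduli space of genus-zero stable maps in a fixed class $\beta$ (so that rationality is built into the moduli problem), using a dimension estimate together with properness over the DVR to force the generic fibre of that space to be non-empty; moreover the class $\beta$ being lifted must lie in the image of \eqref{eq:spec}, which is arranged by taking a connected union of the new rational curve on $X_\p$ with specializations of rational curves already present on $X$. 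Run against the assumption that $X$ carries only finitely many rational curves, this produces rational curves of unbounded degree and hence a contradiction --- so the aggregation issue you flag and the lifting issue are resolved by one and the same argument rather than two separate ones. (A smaller point: at the time of \cite{BHT,LL} the Tate conjecture was not known for all K3 surfaces; the parity step uses the finite-height case \cite{NO} together with a separate treatment of supersingular reductions.)
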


We remark that the theorem as it stands does not imply potential density 
because there is no control over the fields of definition of the rational curves.

\section{Isogeny notion for K3 surfaces}
\label{s:Inose}

Having said that Picard numbers are hard to compute,
there is a big advantage when working with complex K3 surfaces.
This files under a notion of isogeny introduced by Inose:

\begin{Proposition}
\label{Prop:Inose}
Let $X, X'$ be complex K3 surfaces
admitting a dominant rational map $X\dasharrow X'$.
Then $\rho(X)=\rho(X')$.
\end{Proposition}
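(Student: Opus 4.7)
The plan is to exploit the transcendental lattice $T(X)\subset H^2(X,\Z)$, defined as the orthogonal complement of $\NS(X)$, and equivalently characterized as the smallest rational sub-Hodge structure of $H^2(X,\Q)$ whose complexification contains the one-dimensional space $H^{2,0}(X)$. Since $\rho(X)+\mathrm{rank}\,T(X)=b_2(X)=22$ for a complex K3 surface, it suffices to prove $\mathrm{rank}\,T(X)=\mathrm{rank}\,T(X')$. First I would resolve the indeterminacy of $X\dashrightarrow X'$ by a sequence of point blow-ups $\pi\colon Y\to X$, producing a genuine morphism $f\colon Y\to X'$. Since $X$ and $X'$ both have dimension $2$ and $f$ is dominant, $f$ is generically finite of some degree $d\ge 1$, so the projection formula $f_*f^*=d\cdot\mathrm{id}$ shows that $f^*\colon H^2(X',\Q)\hookrightarrow H^2(Y,\Q)$ is an injective morphism of rational Hodge structures, and in particular it sends $H^{2,0}(X')$ isomorphically onto $H^{2,0}(Y)$. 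On the other hand, $\pi$ only adds exceptional classes sitting in $\NS(Y)_\Q$, so $T(Y)_\Q=\pi^*T(X)_\Q$ as rational Hodge structures.

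The crucial ingredient is an irreducibility lemma: for any complex K3 surface $Z$, the rational Hodge structure $T(Z)_\Q$ is simple. Indeed, let $V\subseteq T(Z)_\Q$ be a nonzero rational sub-Hodge structure. Either $V^{2,0}\ne 0$, forcing $V^{2,0}=H^{2,0}(Z)$ by dimension and hence $V=T(Z)_\Q$ via the minimality characterization recalled above; or $V^{2,0}=V^{0,2}=0$, in which case $V$ is purely of type $(1,1)$, so the Lefschetz $(1,1)$ theorem places $V\subseteq\NS(Z)_\Q$, contradicting $V\subseteq T(Z)_\Q$ and $T(Z)\cap\NS(Z)=0$ unless $V=0$.

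Combining these ingredients finishes the proof. The subspace $f^*T(X')_\Q\subset H^2(Y,\Q)$ is a rational sub-Hodge structure whose complexification contains $f^*H^{2,0}(X')=H^{2,0}(Y)$, so the minimality characterization yields $T(Y)_\Q\subseteq f^*T(X')_\Q$. Both sides are irreducible rational Hodge structures (the right-hand side is isomorphic to $T(X')_\Q$ via the injection $f^*$), so this inclusion is forced to be an equality. Hence $\mathrm{rank}\,T(X)=\mathrm{rank}\,T(Y)=\mathrm{rank}\,T(X')$ and therefore $\rho(X)=\rho(X')$. The only step that is more than formal is the irreducibility lemma, which is where the K3 hypothesis enters decisively; everything else—resolution of indeterminacy, injectivity of $f^*$, and the triviality of blow-ups on the transcendental part—is routine complex surface theory.
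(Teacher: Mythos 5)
Your argument is correct and follows essentially the same route as the paper's own (sketched) proof: resolve the indeterminacy by blowing up, pull back the transcendental Hodge structures, and identify them using the characterization of $T(\cdot)_\Q$ as the minimal rational sub-Hodge structure containing $H^{2,0}$. You simply fill in the details the paper leaves implicit (injectivity of $f^*$ via the projection formula, and the simplicity of $T(Z)_\Q$ via Lefschetz $(1,1)$), so nothing further is needed.
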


The proof is an easy exercise using Hodge structures. 
Essentially one only has to consider the blow-up $\tilde X$ of $X$ along the locus
of indeterminacy of the rational map:
$$
\begin{array}{ccc}
\tilde X &&\\
\downarrow & \searrow &\\
X & \dasharrow & X'
\end{array}
$$
Then we can pull-back the transcendental  Hodge structures from $X$ and $X'$ to $\tilde X$.
Since the geometric genus is always 1,
all these Hodge structures are determined as the smallest $\Q$-sub-Hodge structure of 
$H^2(\tilde X,\Q)$
whose complexification contains $H^{2,0}(\tilde X)$.
In particular, they are isomorphic as  $\Q$-Hodge structures.
In consequence the Picard numbers of $X$ and $X'$ coincide.
\qed

\begin{Remark}
It was pointed out  by Shioda
that the analogue of Proposition \ref{Prop:Inose} in positive characteristic fails in general,
as there are unirational supersingular K3 surfaces (e.g.~Kummer surfaces).
There seems to be a proof, though, for K3 surfaces of finite height
(such that the Tate conjecture holds true by \cite{NO}).
\end{Remark}

As an application, we can consider symplectic automorphisms of K3 surfaces,
i.e.~those leaving the regular 2-form invariant.
Over $\C$, Nikulin proved in \cite{Nik}
that the fixed locus always consists of a certain finite number of isolated fixed points
which only depends on the order of the automorphism (at most 8).
On the quotient surface, the fixed points  yield rational double point singularities
whose resolution thus is again a K3 surface.
By Proposition \ref{Prop:Inose} the Picard numbers are the same.

\begin{Example}
Let $X$ be a K3 surface admitting a jacobian elliptic fibration with a torsion section.
Then translation by the section defines an automorphism of the underlying surface.
The quotient surface $X'$ is naturally endowed with an elliptic fibration
such that the quotient map of the K3 surfaces corresponds to an isogeny of the generic fibers 
as elliptic curves over the function field of $\PP^1$.
Here, of course, there is a dual isogeny $X'\dasharrow X$.
(This does not hold in general for symplectic involutions of K3 surfaces.)
\end{Example}

\section{Singular K3 surfaces}
\label{s:sing}

The arithmetic of K3 surfaces is conceivably best understood for big Picard number.
In the workshop this could be witnessed in several talk,
for instance by Bertin, Clingher, Elkies, Kumar and Whitcher.
To begin with, we shall concentrate on the case of maximal Picard number over $\C$
in view of Lefschetz' bound in \eqref{eq:Lef}:

\begin{Definition}
A complex K3 surface $X$ is called \emph{singular}
if $\rho(X)=20$.
\end{Definition}

Note that singular K3 surfaces are smooth by definition;
the phrase "singular" is used in the sense of "exceptional".
In fact, there is an analogy with  elliptic curves
with complex multiplication (CM)
which will become clear in \S\ref{s:SI} (see also Remark \ref{rem:CM}).
In the sequel, the Fermat quartic will serve as our guiding example:

\begin{Example}[Fermat quartic]
\label{ex:Fermat}
The Fermat quartic surface
\[
S = \{x_0^4+x_1^4+x_2^4+x_3^4=0\}\subset\PP^3
\]
has $\rho(S)=20$; thus it defines a singular K3 surface.
There are several ways to see this.
Intrinsically one could appeal to the general theory of Fermat varieties.
These come with a big automorphism group 
whose eigenspaces in cohomology can be described in combinatorial terms
in such a way that one can read off which are algebraic \cite{AS}.
Alternatively one could hand-pick the 48 lines on $S$,
such as
\[
x_0+\sqrt[4]{-1}x_1=x_2+\sqrt[4]{-1}x_3=0,
\]
and verify that their Gram matrix attains the maximum rank of 20.
\end{Example}

We can construct a number of further singular K3 surfaces by applying symplectic automorphisms to $S$
and quotienting as in \S\ref{s:Inose}.
Presently, the alternating group $A_4$ acts symplectically by coordinate permutations,
and we can also combine scalings by 4th roots of unity for symplectic automorphisms.

\subsection{Torelli theorem for singular K3 surfaces}

The Torelli theorem states that K3 surfaces are essentially determined by the Hodge structure underlying the transcendental lattice
\begin{eqnarray}
\label{eq:T}
T(X) = \NS(X)^\bot\subset H^2(X,\Z).
\end{eqnarray}
More precisely, given two K3 surfaces $X, X'$,
any effective Hodge isometry of
\[
H^2(X,\Z) \cong H^2(X',\Z)
\]
is induced from a unique isomorphism $X\cong X'$ (cf.~\cite[VIII.11]{BHPV}).
For singular K3 surfaces this can be made very explicit as follows.
In this situation $T(X)$ is a positive definite even lattice of rank 2
which comes with an orientation induced from the regular 2-form.
We can thus identify $T(X)$ with a quadratic form
\begin{eqnarray}
\label{eq:Q}
Q(X) = \begin{pmatrix}
2a & b\\
b & 2c
\end{pmatrix}
\end{eqnarray}
with integer entries $a,c>0$ and discriminant $d=b^2-4ac>0$.
The Torelli theorem can now be formulated as follows:

\begin{Theorem}
\label{thm:Torelli}
Two singular K3 surfaces $X, X'$ are isomorphic
if and only if there is an isometry $T(X)\cong T(X')$.
Equivalently the quadratic forms\linebreak $Q(X), Q(X')$ are conjugate under $\mbox{SL}(2,\Z)$.
\end{Theorem}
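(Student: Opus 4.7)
The plan is to reduce Theorem~\ref{thm:Torelli} to the general Torelli theorem for K3 surfaces already cited in the excerpt: any effective Hodge isometry $H^2(X,\Z)\cong H^2(X',\Z)$ is induced by a unique isomorphism $X\cong X'$. The forward direction is immediate. An isomorphism $X\cong X'$ induces an effective Hodge isometry of $H^2$, which restricts to a Hodge isometry $T(X)\cong T(X')$. Since both sides are positive definite of rank $2$ and oriented by the complex line $H^{2,0}$, such a Hodge isometry is the same as an orientation-preserving lattice isometry, which in terms of Gram matrices written in oriented bases is precisely an $\mathrm{SL}(2,\Z)$-conjugation of $Q(X)$ into $Q(X')$.

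For the converse I would proceed in three steps. Given an orientation-preserving isometry $\varphi\colon T(X)\to T(X')$, first extend $\varphi$ to a lattice isometry $\Phi\colon H^2(X,\Z)\to H^2(X',\Z)$. Both sides are isomorphic to the K3 lattice $\Lambda_{K3}=U^{\oplus 3}\oplus E_8(-1)^{\oplus 2}$, inside which $T\oplus\NS$ embeds as a finite-index sublattice glued along a canonical anti-isometry of discriminant forms $q_{\NS}\cong -q_T$. Since $\varphi$ identifies $q_{T(X)}$ with $q_{T(X')}$, the N\'eron-Severi lattices $\NS(X)$ and $\NS(X')$ share signature $(1,19)$ and discriminant form, so lie in the same genus. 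Because the discriminant group has at most two generators while $\NS$ has rank $20$, Nikulin's uniqueness theorem for even indefinite lattices applies and furnishes an isometry $\psi\colon \NS(X)\to\NS(X')$; Nikulin's surjectivity of $O(\NS(X'))\to O(q_{\NS(X')})$ in the same range then lets me adjust $\psi$ so that $\varphi\oplus\psi$ respects the glue and extends to $\Phi$. Second, $\Phi$ is automatically a Hodge isometry: orientation preservation of $\varphi$ identifies $H^{2,0}(X)$ with $H^{2,0}(X')$, and $\psi$ is trivially Hodge because $\NS$ is purely of type $(1,1)$. Third, make $\Phi$ effective by composing with $\pm 1$ (to match the positive cones in $\NS\otimes\R$) and with reflections in classes of smooth rational $(-2)$-curves on $X'$ (which act trivially on $T(X')$), as in the standard finale of any Torelli-type argument. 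Applying the general Torelli theorem then yields the desired isomorphism $X\cong X'$.

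The main obstacle is the extension step, that is, promoting an abstract orientation-preserving isometry of rank-$2$ transcendental lattices to a Hodge-compatible isometry of the full K3 lattices. This succeeds so cleanly in the singular case precisely because $\NS$ has rank $20$ and is indefinite, which places it squarely in Nikulin's regime where both genus uniqueness and surjectivity onto the orthogonal group of the discriminant form are available. Once this lattice-theoretic input is granted, matching the glues and producing an effective representative are routine.
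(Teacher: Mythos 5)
Your proof is correct and follows the route the paper itself indicates: in the survey the theorem is stated without proof, as an explicit reformulation of the general Torelli theorem for K3 surfaces cited immediately before it (the result going back to Shioda--Inose, who deduced it from that Torelli theorem). Your filling-in of the only nontrivial step --- extending an oriented isometry of the rank-$2$ transcendental lattices to an effective Hodge isometry of $H^2$ via Nikulin's genus-uniqueness and the surjectivity of $O(\NS)\to O(q_{\NS})$ for the rank-$20$ lattice $\NS$, then correcting by $\pm 1$ and reflections in $(-2)$-classes --- is exactly the standard (Morrison-style) argument, and the numerical hypothesis $\ell(A_{T})\le 2\le \mathrm{rank}\,\NS-2$ does hold. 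One point worth stating explicitly: as literally written, ``isometry'' ($\mathrm{GL}(2,\Z)$-equivalence) and ``$\mathrm{SL}(2,\Z)$-conjugacy'' are not the same condition; you correctly read the theorem as concerning isometries preserving the orientations induced by the holomorphic $2$-forms, which is what makes your extended isometry a Hodge (rather than anti-Hodge) isometry and is essential, since a form and its inverse in the class group generally correspond to non-isomorphic, complex-conjugate surfaces.
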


\begin{Example}[Fermat quartic cont'd]
\label{ex:Fermat2}
The Fermat quartic $S$
has transcendental lattice represented by the quadratic form
\[
Q(S) =
\begin{pmatrix}
8 & 0\\0 & 8
\end{pmatrix}.
\]
Proving this is a non-trivial task, and the first proper proof seems to go back to Mizukami
\cite{Mizukami}.
(The proof in \cite{PSS} relied on a claim by Demjanenko
which was only later justified by Cassels in \cite{Cassels}.)
Generally this question can be reduced to the problem
whether the lines generate $\NS(S)$ fully or only up to finite index 
(which can be solved using specialisation again, cf.~\cite{SSvL}).
\end{Example}

\subsection{Surjectivity of the period map}

In the moduli context it remains to discuss the surjectivity of the period map.
With the formulation of the Torelli theorem 
at hand,
this amounts to the question
whether all positive definite quadratic forms as in \eqref{eq:Q}
are attained by singular K3 surfaces.
Here it is crucial to note that Kummer surfaces will not be sufficient
since the quotient has the quadratic forms of the abelian surface multiplied by $2$:
\begin{eqnarray}
\label{eq:2Q}
T(\Km(A)) = T(A)[2], \;\;\; \text{i.e.} \;\;\; Q(\Km(A)) = 2Q(A).
\end{eqnarray}
Here transcendental lattice and quadratic form of the abelian surface $A$
are defined in complete analogy with \eqref{eq:T}, \eqref{eq:Q},
and the above relation is valid for complex abelian surfaces of any Picard number.
In essence, we find that Kummer surfaces have 2-divisible transcendental lattices;
this prevents most quadratic forms as in \eqref{eq:Q} to be associated to singular K3 surfaces which are Kummer.

Despite this failure of Kummer surfaces to lead directly to the surjectivity of the period map
for singular K3 surfaces,
they still prove extremely useful.
The first complete argument goes back to Shioda and Inose \cite{SI} 
who proceed in the following two steps:
\begin{enumerate}
\item
refer to work of Shioda and Mitani \cite{SM}
where the corresponding Torelli theorem
including the surjectivity of the period map
is proven for singular abelian surfaces;
\item
prove that any singular K3 surface admits a rational map of degree 2 to a Kummer surface
such that the transcendental lattices differ by the same factor of 2 as in \eqref{eq:2Q}.
\end{enumerate}

Following Morrison \cite{Mo}, the  construction forming the second step 
is nowadays often called a \emph{Shioda-Inose structure}.
We will review it in detail in the next section.
Meanwhile we conclude this section with a brief discussion of the first step.

\subsection{Singular abelian surfaces}
\label{ss:ab}

In analogy with singular K3 surfaces,
a complex abelian surface $A$ is called singular
if its Picard number attains the maximum $\rho(A)=4$.
In \cite{SM} Shioda and Mitani worked out the Torelli theorem
including the surjectivity of the period map.
We summarize their result in the following theorem:

\begin{Theorem}[Shioda--Mitani]
\label{thm:SM}
Isomorphism classes of singular abelian surfaces are in bijective correspondence
with conjugacy classes of binary even positive definite quadratic forms.
\end{Theorem}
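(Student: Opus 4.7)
The plan is to make the map $A \mapsto Q(A)$ explicit in both directions, with the central technical input being that every singular abelian surface is already (not just up to isogeny) a product of two CM elliptic curves in the same imaginary quadratic field.

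First, I would check that the assignment is well-defined. For a complex abelian surface $A$, the cup-product form on $H^2(A,\Z)$ has signature $(3,3)$, and $\NS(A)$ has signature $(1,\rho(A)-1)$. Setting $\rho(A)=4$ gives $T(A)=\NS(A)^\bot$ of rank $2$ and signature $(2,0)$; the nowhere vanishing 2-form $\omega_A$ picks out an orientation. Hence $T(A)$ is an even, positive definite, oriented rank-2 lattice, i.e.\ a well-defined $\SL(2,\Z)$-conjugacy class of quadratic forms $Q(A)$ of the shape \eqref{eq:Q} with $d=b^2-4ac<0$.

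Next I would address surjectivity by exhibiting, for any such form $Q$, an explicit singular abelian surface with $Q(A)\sim Q$. The construction is to take two points in the upper half plane lying in the imaginary quadratic field $K=\Q(\sqrt d)$, namely
\[
\tau_1 = \frac{-b+\sqrt d}{2a}, \qquad \tau_2 = \frac{b+\sqrt d}{2},
\]
and set $A=E_{\tau_1}\times E_{\tau_2}$. Both $E_{\tau_i}$ have CM by orders in $K$, so $\rho(A)\geq 4$ (with the two elliptic classes together with the graph of an isogeny between $E_{\tau_1}$ and $E_{\tau_2}$ spanning a rank-$4$ sublattice of $\NS(A)$), forcing $A$ to be singular. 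A direct computation of $T(A)\subset H^2(E_{\tau_1},\Z)\otimes H^0 \oplus H^0\otimes H^2(E_{\tau_2},\Z)\oplus H^1(E_{\tau_1},\Z)\otimes H^1(E_{\tau_2},\Z)$ in bases adapted to the $\tau_i$ then yields the Gram matrix $Q$.

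For injectivity I would invoke the structural result that \emph{every} singular abelian surface $A$ is isomorphic to a product $E_{\tau_1}\times E_{\tau_2}$ as above. Granted this, the computation of $T(A)$ in terms of $(\tau_1,\tau_2)$ is reversible: the $\SL(2,\Z)$-class of $Q(A)$ determines the pair $(\tau_1,\tau_2)$ up to the natural $\mbox{SL}(2,\Z)\times\mbox{SL}(2,\Z)$-plus-swap action on $\HH\times\HH$, which is exactly the ambiguity in writing $A$ as a product. Hence $Q(A)\sim Q(A')$ implies $A\cong A'$.

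The hard part is the structural result that a singular abelian surface is already a product of CM elliptic curves. The idea I would pursue is as follows: since $\rho(A)=4$, the Rosati-invariant part of $\End^0(A)$ is of dimension $\geq 3$ as a $\Q$-vector space, which forces $\End^0(A)$ to be either a real quadratic extension acting on each factor, or $M_2(K)$ for the imaginary quadratic field $K=\Q(\sqrt d)$, depending on whether $A$ is simple or not. A Hodge-theoretic argument shows the simple case is impossible in rank-2 transcendental lattice with $T(A)\otimes\R$ of Hodge type $(2,0)+(0,2)$ with no $(1,1)$ part (the endomorphism algebra must act on $T(A)$ preserving the Hodge structure, and $K$ does so only if $A$ splits up to isogeny). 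This reduces the problem to controlling the lattice refinement within the isogeny class, which is the heart of Shioda--Mitani's computation, carried out via elementary divisors of $2\times 2$ matrices with coefficients in $\OO_K$.
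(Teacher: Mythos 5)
Your skeleton matches the paper's (and Shioda--Mitani's) argument: well-definedness of $A\mapsto Q(A)$ via the oriented rank-two transcendental lattice, surjectivity via the explicit product $E_{\tau_1}\times E_{\tau_2}$ with $\tau_1=(-b+\sqrt d)/(2a)$ and $\tau_2=(b+\sqrt d)/2$ (exactly the curves in \eqref{eq:E}), and injectivity via the structural result that a singular abelian surface splits as a product of isogenous CM elliptic curves. The paper only sketches the surjectivity half, so the real comparison concerns your injectivity step, and there the argument as written contains a genuine error.

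The false claim is that the $\mathrm{SL}_2(\Z)$-class of $Q(A)$ determines $(\tau_1,\tau_2)$ up to the $\mathrm{SL}_2(\Z)\times\mathrm{SL}_2(\Z)$-plus-swap action, and that this action is ``exactly the ambiguity in writing $A$ as a product''. Neither half is true. For ideals $\ia,\mathfrak{b}$ of $\OO_K$ one has $\ia\oplus\mathfrak{b}\cong\OO_K\oplus\ia\mathfrak{b}$ as $\OO_K$-modules, hence $\C/\ia\times\C/\mathfrak{b}\cong\C/\OO_K\times\C/\ia\mathfrak{b}$ as abelian surfaces; already for $K=\Q(\sqrt{-5})$ and $\ia=(2,1+\sqrt{-5})$ this gives $E_{\OO_K}\times E_{\OO_K}\cong E_{\ia}\times E_{\ia}$ with distinct pairs of $j$-invariants. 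So a singular abelian surface decomposes as a product in essentially different ways, and correspondingly the fibre of $(\tau_1,\tau_2)\mapsto Q(E_{\tau_1}\times E_{\tau_2})$ over a fixed form class is in general much larger than a single orbit of your action (roughly class-group-sized). The content of the theorem is precisely that all these products are nevertheless isomorphic and realize the same form; injectivity only follows once one proves the \emph{strong} structural statement that every singular $A$ is isomorphic to the one canonical product built from $Q(A)$ itself. That is the period-matrix/elementary-divisor computation you defer to in your last sentence, so the outline is repairable, but the ``reversibility'' paragraph would not survive as stated. (Two minor points: for $\rho=4$ the Rosati-invariant part of $\mathrm{End}^0(A)$ has dimension $4$, not merely $\geq 3$, and simplicity is excluded because the quaternionic case only reaches $3$; and your sign convention $d=b^2-4ac<0$ is the correct one for a positive definite form.)
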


The heart of the proof is an explicit construction of singular abelian surfaces
with given transcendental lattice.
In terms of the quadratic form $Q$ as in \eqref{eq:Q},
one exhibits two elliptic curves as complex tori
\begin{eqnarray}\label{eq:E}
\;\;\;\;\;\;\;\;\; E=\C/(\Z+\tau\Z),\;\;\tau = \dfrac{-b+\sqrt{d}}{2a},\;\;\;\;\; E'=\C/(\Z+\tau'\Z),\;\;\tau' = \dfrac{b+\sqrt{d}}2.
\end{eqnarray}
Then Shioda and Mitani compute that the product $E\times E'$
has transcendental lattice exactly represented by $Q$.

\begin{Remark}
\label{rem:CM}
The terminology "singular abelian/K3 surface" is indeed appropriate in the following sense:
the elliptic curves $E, E'$ in \eqref{eq:E} have complex multiplication (CM) (in fact, they are isogenous);
classically their j-invariants are called "singular".
\end{Remark}

\section{Shioda-Inose structures}
\label{s:SI}

Given a quadratic form $Q$ as in \eqref{eq:Q},
\ref{ss:ab} exhibits elliptic curves $E, E'$ such that $T(\Km(E\times E'))$
is represented by $2Q$.
In order to recover the original quadratic form
on a singular K3 surface,
Shioda and Inose developed a geometric construction that applies generally to 
Kummer surfaces of product type.
An instrumental ingredient consists in the so-called double Kummer pencil
formed by 24 rational curves on $\Km(E\times E')$.
In terms of the elliptic fibrations over $\PP^1$ induced by the projections onto either factor,
the curves constitute the four singular fibers (type $I_0^*$, 5 components each)
and the four 2-torsion sections.
We sketch the curves in the following figure
where the fibrations could be thought of both in the horizontal or vertical direction:

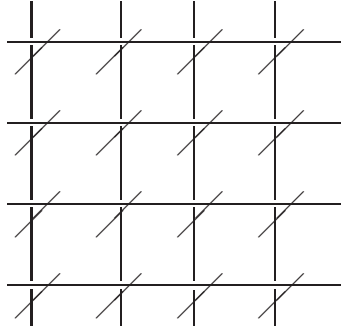
\begin{figure}[ht!]
\begin{center}
\setlength{\unitlength}{1.08mm}
\begin{picture}(60, 47)
%
%

{
\put(7, 10){\line(1, 0){41}}}

{
\put(7, 20){\line(1, 0){41}}
\put(7, 30){\line(1, 0){41}}}
\put(7, 40){\line(1, 0){41}}
%
%
{
\put(10, 5.1){\line(0, 1){4.4}}
\put(10, 10.6){\line(0, 1){8.9}}
\put(10, 20.6){\line(0, 1){8.9}}
\put(10, 30.6){\line(0, 1){8.9}}
\put(10, 40.6){\line(0, 1){4.4}}
\put(21, 5.1){\line(0, 1){4.4}}
\put(21, 10.6){\line(0, 1){8.9}}
\put(21, 20.6){\line(0, 1){8.9}}
\put(21, 30.6){\line(0, 1){8.9}}
\put(21, 40.6){\line(0, 1){4.4}}}

{
\put(30, 5){\line(0, 1){4.4}}
\put(30, 10.6){\line(0, 1){8.9}}
\put(30, 20.6){\line(0, 1){8.9}}
\put(30, 30.6){\line(0, 1){8.9}}
\put(30, 40.6){\line(0, 1){4.4}}
\put(40, 5.1){\line(0, 1){4.4}}
\put(40, 10.6){\line(0, 1){8.9}}
\put(40, 20.6){\line(0, 1){8.9}}
\put(40, 30.6){\line(0, 1){8.9}}
\put(40, 40.6){\line(0, 1){4.4}}}
%
%

\put(8, 6){\line(1,1){5.5}}
{
\put(8, 16){\line(1,1){5.5}}
\put(8, 26){\line(1,1){5.5}}
\put(8, 36){\line(1,1){5.5}}}
%
{
\put(18, 6){\line(1,1){5.5}}
\put(18, 16){\line(1,1){5.5}}}
\put(18, 26){\line(1,1){5.5}}
\put(18, 36){\line(1,1){5.5}}
%

{
\put(28, 6){\line(1,1){5.5}}}
\put(28, 16){\line(1,1){5.5}}
\put(28, 26){\line(1,1){5.5}}
{
\put(28, 36){\line(1,1){5.5}}
%

\put(38, 6){\line(1,1){5.5}}}
\put(38, 16){\line(1,1){5.5}}
\put(38, 26){\line(1,1){5.5}}

{
\put(38, 36){\line(1,1){5.5}}}
%
\end{picture}
\end{center}
\vskip -.3cm
\caption{Double Kummer pencil}\label{fig:SI}
\end{figure}

The key property for the considerations of \cite{SI} is
that Kummer surfaces of product type admit several distinct elliptic fibrations.
In the generic situation these were later classified by Oguiso \cite{O};
Shioda and Inose exploit this feature by exhibiting a specific fibration on $\Km(E\times E')$
by singling out a divisor $D$ of Kodaira type $II^*$ in the double Kummer pencil.
Recall from \S\ref{s:ell} that the divisor $D$ (printed in blue in Figure \ref{fig:SI2})
induces indeed an elliptic fibration 
\[
\Km(E\times E') \to \PP^1.
\]
Moreover the rational curve $C$ in green meets $D$ transversally at exactly one point, 
so $C$ will serve as a section of the new fibration.

\begin{figure}[ht!]
\begin{center}
\setlength{\unitlength}{1.08mm}
\begin{picture}(60, 47)
%
%

{\color{green}
\put(7, 10){\line(1, 0){41}}}

{\color{blue}
\put(7, 20){\line(1, 0){41}}
\put(7, 30){\line(1, 0){41}}}
\put(7, 40){\line(1, 0){41}}
%
%
{\color{blue}
\put(10, 5.1){\line(0, 1){4.4}}
\put(10, 10.6){\line(0, 1){8.9}}
\put(10, 20.6){\line(0, 1){8.9}}
\put(10, 30.6){\line(0, 1){8.9}}
\put(10, 40.6){\line(0, 1){4.4}}
\put(21, 5.1){\line(0, 1){4.4}}
\put(21, 10.6){\line(0, 1){8.9}}
\put(21, 20.6){\line(0, 1){8.9}}
\put(21, 30.6){\line(0, 1){8.9}}
\put(21, 40.6){\line(0, 1){4.4}}}

{\color{red}
\put(30, 5){\line(0, 1){4.4}}
\put(30, 10.6){\line(0, 1){8.9}}
\put(30, 20.6){\line(0, 1){8.9}}
\put(30, 30.6){\line(0, 1){8.9}}
\put(30, 40.6){\line(0, 1){4.4}}
\put(40, 5.1){\line(0, 1){4.4}}
\put(40, 10.6){\line(0, 1){8.9}}
\put(40, 20.6){\line(0, 1){8.9}}
\put(40, 30.6){\line(0, 1){8.9}}
\put(40, 40.6){\line(0, 1){4.4}}}
%
%

\put(8, 6){\line(1,1){5.5}}
{\color{blue}
\put(8, 16){\line(1,1){5.5}}
\put(8, 26){\line(1,1){5.5}}
\put(8, 36){\line(1,1){5.5}}}
%
{\color{blue}
\put(18, 6){\line(1,1){5.5}}
\put(18, 16){\line(1,1){5.5}}}
\put(18, 26){\line(1,1){5.5}}
\put(18, 36){\line(1,1){5.5}}
%

{\color{red}
\put(28, 6){\line(1,1){5.5}}}
\put(28, 16){\line(1,1){5.5}}
\put(28, 26){\line(1,1){5.5}}
{\color{red}

\put(28, 36){\line(1,1){5.5}}
%

\put(38, 6){\line(1,1){5.5}}}
\put(38, 16){\line(1,1){5.5}}
\put(38, 26){\line(1,1){5.5}}

{\color{red}
\put(38, 36){\line(1,1){5.5}}}
%
\end{picture}
\end{center}
\vskip -.3cm
\caption{Divisor of Kodaira type $II^*$ in the double Kummer pencil}\label{fig:SI2}
\end{figure}
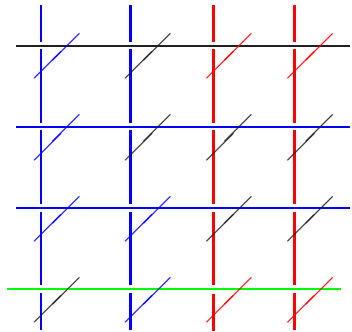

There are 6 rational curves in the double Kummer pencil 
(printed in red in Figure \ref{fig:SI2})
which do not meet the divisor $D$ of Kodaira type $II^*$.
Hence they are components of other fibers.
In fact, since both perpendicular 3-chains meet the section $C$,
they are located in different fibers.
A detailed analysis shows that the fiber types can only be $I_0^*, I_1^*$ or $IV^*$.
In fact, unless $E\cong E'$, both singular fibers have type $I_0^*$.

Shioda and Inose proceed by a quadratic base change
that ramifies exactly at the above two singular fibers. 
Pull-back from $\Km(E\times E')$ gives another jacobian elliptic surface $X$
with the $II^*$ fiber duplicated while the two fibers in the branch locus
are generically replaced by smooth fibers. 
The Euler number $e(X)=24$ reveals that $X$ is again K3;
in fact, since $X$ dominates $\Km(E\times E')$ by construction,
we have $\rho(X)=\rho(\Km(E\times E'))$ by Proposition \ref{Prop:Inose}.
One concludes by verifying by an explicit calculation that
\[
T(X) \cong T(\Km(E\times E'))[1/2].
\]
Together with \eqref{eq:2Q}, this gives
\[
T(X) \cong T(A),
\]
and the surjectivity of the period map for singular K3 surfaces
follows from Theorem \ref{thm:SM}. \qed

\medskip

To conclude this section, let us summarize the above construction:
a K3 surface $X$ with a rational map of degree 2 to a Kummer surface
which induces multiplication by 2 on the transcendental lattices:

 \begin{eqnarray*}
 \label{eq:SI}
  \xymatrix{A \ar@{-->}[dr] && X\ar@{-->}[dl]&\\
 & \Km(A)&& T(A)\cong T(X)}
 \end{eqnarray*}

In generality, Morrison coined the terminology \emph{Shioda-Inose structure}
for such a diagram. 
In \cite{Mo} he proved that any K3 surface with $\rho\geq 19$ admits a Shioda-Inose structure,
and he worked out explicit criteria for K3 surfaces with $\rho=17,18$.
Shioda-Inose structures are a versatile tool for the study of K3 surfaces;
they turned up during the workshop
especially in the context of classification and moduli problems 
(see also \cite{S-SI} and the references therein).

Before continuing our investigation of singular K3 surfaces, eventually aiming for zeta functions
and modularity,
we take a little detour towards Mordell-Weil ranks of elliptic K3 surfaces.

\section{Mordell-Weil ranks of elliptic K3 surfaces}
\label{s:MW}

By the Shioda-Tate formula \eqref{eq:ST}, 
a jacobian elliptic K3 surface can have Mordell-Weil rank at most $18$ over $\C$.
Recall from \S\ref{s:ell} how elliptic fibrations on K3 surfaces are governed by lattice theory.
In order to see which MW-ranks are attained, 
one can therefore appeal to the moduli theory of lattice polarised K3 surfaces,
combined with the theory of Mordell-Weil lattices \cite{ShMW}.
The solution was first given by Cox \cite{Cox}:

\begin{Theorem}[Cox]
Any integer between 0 and 18 occurs as Mordell-Weil rank of a complex elliptic K3 surface.
\end{Theorem}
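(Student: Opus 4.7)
The plan is to realize each integer $r\in\{0,1,\ldots,18\}$ as the Mordell-Weil rank of a complex jacobian elliptic K3 surface $X_r$ by means of lattice-polarized K3 techniques, arranging the N\'eron-Severi lattice of $X_r$ to have rank exactly $r+2$ and to contain no $(-2)$-class orthogonal to the intended fiber and section. Via the Shioda-Tate formula \eqref{eq:ST}, the absence of reducible fibers then forces the $\MW$-rank to equal $\rho(X_r)-2=r$. Concretely, for each $r$ I would fix an even negative-definite rank-$r$ lattice $N_r$ with no $(-2)$-vectors---\emph{rootless}---and set $L_r := U\oplus N_r$; here $U$ is the hyperbolic plane, so $L_r$ has signature $(1,r+1)$, and the isotropic vector $f$ together with the $(-2)$-class $O$ satisfying $O\cdot f=1$ sit in the summand $U$.

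The construction then proceeds in four steps. First, verify that $L_r$ admits a primitive embedding into the K3 lattice $\Lambda_{K3} = 3U\oplus 2E_8(-1)$; this is a routine check of Nikulin's existence criteria using the discriminant form of $N_r$. Second, invoke the surjectivity of the period map for $L_r$-polarized K3 surfaces (combined with the Torelli theorem) to produce a complex K3 surface $X_r$ with a primitive embedding $L_r\hookrightarrow\NS(X_r)$; for a very general point in the $(18-r)$-dimensional moduli space of $L_r$-polarized K3 surfaces, this inclusion is an equality. Third, apply the Piatetski-Shapiro-Shafarevich criterion \cite[\S3, Thm.~1]{PSS} recalled in \S\ref{s:ell} to conclude that $f$ induces an elliptic fibration $X_r\to\PP^1$, and note that after finitely many Weyl reflections on the $(-2)$-classes in $\NS(X_r)$ the class $O$ becomes a genuine section. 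Fourth, observe that any component of a reducible singular fiber would be a smooth rational $(-2)$-curve in $\langle f,O\rangle^\perp\cap\NS(X_r)=N_r$; since $N_r$ is rootless by construction, no reducible fibers appear, and the Shioda-Tate formula \eqref{eq:ST} yields $\MW$-rank $=r$ directly.

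The principal obstacle is the lattice-theoretic input in the first step: for every $r\leq 18$, one must exhibit a rootless negative-definite even lattice $N_r$ of rank $r$ whose discriminant form is compatible with a primitive embedding $U\oplus N_r\hookrightarrow\Lambda_{K3}$. The difficulty grows with $r$, because the orthogonal complement of $L_r$ in $\Lambda_{K3}$ has rank only $20-r$, which bounds the length of the discriminant group of $N_r$ by $20-r$; naive choices such as $\langle-4\rangle^r$ break down well before $r=18$. Overcoming this calls for more refined building blocks---scalings and gluings of $E_8$, sublattices of the Niemeier lattices, or, at the extreme $r=18$, the N\'eron-Severi lattice of a singular K3 surface carrying an elliptic fibration with only irreducible singular fibers (which exists as part of the classification of singular K3 surfaces via Shioda-Inose structures, cf.~\S\ref{s:SI}). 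Once such $N_r$ are assembled for each $r\in\{0,\ldots,18\}$, the remaining steps run uniformly and Cox's theorem follows.
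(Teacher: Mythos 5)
Your skeleton is exactly the route the paper ascribes to Cox: polarize by $L_r=U\oplus N_r$ with $N_r$ rootless, use surjectivity of the period map to get a very general $X_r$ with $\NS(X_r)=L_r$, produce the fibration and section from the isotropic class via \cite[\S3, Thm.~1]{PSS}, and read off the Mordell--Weil rank from the Shioda--Tate formula \eqref{eq:ST} once rootlessness of $N_r=\langle f,O\rangle^\perp$ excludes reducible fibers. All of those reductions are sound (the only minor imprecision is that it is the \emph{non-identity} components of a reducible fiber that land in $\langle f,O\rangle^\perp$, but one such component always exists, so the argument goes through). Since the paper itself only cites Cox and indicates this strategy in one sentence, your proposal is a faithful expansion of the intended proof.

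The genuine issue is that you have deferred essentially all of the content of the theorem to your ``first step'': exhibiting, for every $r\le 18$, a rank-$r$ rootless even negative-definite $N_r$ such that $U\oplus N_r$ embeds primitively into $3U\oplus 2E_8(-1)$. For $r\gtrsim 10$ this is not a routine check of Nikulin's criteria --- the length bound $\ell(A_{N_r})\le 20-r$ becomes severe, and at $r=18$ you are asking for a rank-$18$ rootless even lattice with discriminant group of length at most $2$; the paper's discussion of Mordell--Weil rank $18$ over $\Q$ (\S\ref{s:fields}) shows how close such lattices come to extremal sphere-packing territory, so ``more refined building blocks'' is not yet an argument. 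Two concrete ways to close the gap, both visible in the paper: (i) for the top ranks, take $N_r$ to be the (rootless, by \cite{ShMW}) Mordell--Weil lattice of one of Kuwata's surfaces $X^{(n)}$ from Table \ref{Table:Kuwata} --- e.g.\ $X^{(6)}$ for isogenous CM curves has $24\,I_1$ fibers and realizes $r=18$ directly, with no period-map argument needed; or (ii) abandon the abstract lattice construction altogether and prove the theorem by the explicit base-change constructions of Table \ref{Table:Kuwata}, which realize every rank except $15$, plus Kloosterman's example \cite{Kl-15} for rank $15$. As written, your proof is a correct reduction of Cox's theorem to a nontrivial lattice-existence statement that you have not established.
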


In the spirit of \S\ref{s:rho=1},
it is still a delicate problem to exhibit an elliptic K3 surface with given MW-rank.
Kuwata gave an almost complete answer in \cite{Kuwata}
based on the Shioda-Inose structure related to $E\times E'$.
Here we will briefly review his construction.

Recall from the previous section 
that the Kummer surface $\Km(E\times E')$ is covered by another K3 surface $X$
which is equipped with an induced jacobian elliptic fibration with 2 fibers of Kodaira type $II^*$.
Much like what we did to the original elliptic fibration on $\Km(E\times E')$
one can try to apply base changes to $X$ which remain K3.
Here's the first example:

\begin{Example}
\label{ex:(2)}
A quadratic base change ramified at the 2 special fibers
gives another K3 surface $X^{(2)}$ (by Euler number considerations).
The induced elliptic fibration has two fibers of type $IV^*$ instead of $II^*$ (see the small table below).
It is a non-trivial fact that $X^{(2)}$ indeed returns the Kummer surface $\Km(E\times E')$
(which thus sandwiches $X$, see \cite{Sandwich}).
The elliptic fibration can be understood in terms of \eqref{eq:Km2} as projection onto $\PP^1_y$
(i.e.~a quadratic base change of a cubic pencil -- the pencil is the corresponding quadratic twist of $X$).
\end{Example}

In general, a fiber of type $II^*$ (outside characteristics $2,3$)
pulls back as follows under a cyclic base change of degree $n$:
$$
\begin{array}{c|cccccc}
n\mod 6 & 0 & 1 & 2 & 3 & 4 & 5\\
\hline
\text{fiber type} & 
I_0 & II^* & IV^* & I_0^* & IV & II
\end{array}
$$
Hence a simple Euler number computation allows us to determine all base changes of $X$ 
which remain K3.
This requires that the base change is cyclic of degree $n\leq 6$ and 
exactly ramifies at the 2 special fibers of type $II^*$ (as in Example \ref{ex:(2)}).
By construction, the resulting K3 surface $X^{(n)}$ automatically comes with a rational map of degree $n$ to $X$;
hence by Proposition \ref{Prop:Inose}
\begin{eqnarray}
\label{eq:n}
\rho(X^{(n)}) = \rho(X) = \rho(\Km(E\times E')).
\end{eqnarray}
In consequence, the MW-rank of $X^{(n)}$ can be computed depending only on $E$ and $E'$.
Essentially this relies on $E$ and $E'$ being isogenous or not:
\begin{eqnarray*}
\label{eq:rho-p}
\rho(\Km(E\times E'))=18+\mbox{rank}(\mbox{Hom}(E,E')) =
\begin{cases}
 18, & \text{if $E\not\sim E'$;}\\
 19, & \text{if $E\sim E'$ without CM};\\
 20, & \text{if $E\sim E'$ with CM}.
              \end{cases}
\end{eqnarray*}
In order to apply the Shioda-Tate formula \eqref{eq:ST},
 one further needs that the fibrations have additional reducible fibers
if and only if $E$ and $E'$ are isomophic.
With the exception of the cases $j(E),j(E')\in\{0,1728\}$,
the results are summarised in Table \ref{Table:Kuwata}.

\begin{table}[ht!]
$$
\begin{array}{c||cc||c|c|c}
\hline
& \multicolumn{2}{c||}{E\cong E'} 
& E\not\cong E' & E\sim E' & E\not\sim E'\\
n & \text{config} & \mbox{MW-rank} & \text{config} & \mbox{MW-rank} & \mbox{MW-rank}\\
\hline
1 & 2\,II^*, I_2, 2\,I_1 & \begin{cases} 1\\0\end{cases} & 2\,II^*, 4\,I_1 & \begin{cases} 2\\1\end{cases} & 0\\
2 & 2\,IV^*, 2\,I_2, 4\, I_1 & \begin{cases} 4\\3\end{cases} & 2\,IV^*, 8\,I_1 & \begin{cases} 6\\5\end{cases} & 4\\
3 & 2\,I_0^*, 3\,I_2, 6\,I_1 &\begin{cases} 7\\6\end{cases} & 2\,I_0^*, 12\, I_1 & \begin{cases} 10\\9\end{cases} & 8\\
4 & 2\,IV, 4\,I_2,8\,I_1 & \begin{cases} 10\\9\end{cases} & 2\,IV, 16\,I_1 & \begin{cases} 14\\13\end{cases} & 12\\
5 & 2\,II, 5\,I_2, 10\,I_1 & \begin{cases} 13\\12\end{cases} & 2\,II, 20\,I_1 & \begin{cases} 18\\17\end{cases} & 16\\
6 &  6\,I_2, 12\,I_1 & \begin{cases} 12\\11\end{cases} & 24\,I_1 & \begin{cases} 18\\17\end{cases} & 16\\
\hline
\end{array}
$$
\caption{Fiber configuration and Mordell-Weil rank of $X^{(n)}$}
\label{Table:Kuwata}
\end{table}

The only MW-rank missing from Table \ref{Table:Kuwata} is 15.
This gap was subsequently filled by Kloosterman.
In \cite{Kl-15}, he exhibited an explicit elliptic K3 surface with MW-rank 15
much along the lines of \S\ref{s:K3-1}.
In brief he worked out a 3-dimensional family of elliptic K3 surfaces
with generic MW-rank 15 (again using base change from an appropriate elliptic K3 surface).
Then the specialisation technique from \ref{ss:vL}, \ref{ss:Kl} enabled him to single out a general member
of the family.

\begin{Remark}
Extending \eqref{eq:n},
Shioda proved in \cite{Sh-SP} that $T(X^{(n)})=T(X)[n]$.
\end{Remark}

One can enrich the arithmetic flavour of the MW-rank problem
by specifying the ground field in consideration.
Naturally one might wonder which MW-ranks are attained by elliptic K3 surfaces over $\Q$
(i.e.~as elliptic curves over $\Q(t)$).
The answer is due to Elkies (see \cite{Elkies-rank}):

\begin{Theorem}[Elkies]
\label{thm:17}
The maximal MW-rank of an elliptic K3 surface over $\Q$ is 17.
\end{Theorem}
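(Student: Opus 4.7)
The plan is to split the theorem into an upper bound $\mathrm{rank}\,\MW(X/\Q)\leq 17$ and the construction of an elliptic K3 surface over $\Q$ realising rank $17$.

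For the upper bound, suppose for contradiction that a jacobian elliptic K3 surface $X/\Q$ has Mordell--Weil group of rank $18$ over $\Q$. The Shioda--Tate formula \eqref{eq:ST} together with the bound $\rho(X_{\bar\Q})\leq 20$ forces both $\rho(X_{\bar\Q})=20$ and the absence of reducible singular fibers, so the trivial lattice is $U=\langle O,F\rangle$. The classes $O,F$ together with the $18$ given $\Q$-rational sections span a finite-index sublattice of $\NS(X_{\bar\Q})$, hence the absolute Galois group acts trivially on $\NS(X_{\bar\Q})$. Thus $X$ is a singular K3 surface with $\rho_\Q(X)=20$, and via its Shioda--Inose structure (\S\ref{s:SI}) together with \eqref{eq:rho-p} we conclude that $X$ is $\bar\Q$-isogenous to a Kummer surface $\Km(E\times E')$ whose two factors are isogenous elliptic curves admitting complex multiplication by the same imaginary quadratic field $K$.

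The task is then to derive a contradiction by combining (i) a classification of singular K3 surfaces with $\rho_\Q=20$, obtained from Livn\'e's modularity theorem for the transcendental Galois representation (a weight-$3$ CM newform attached to $K=\Q(\sqrt{-d})$ with $d=\disc T(X)$) together with results on fields of definition of singular K3 surfaces, which confine the admissible $d$ to a finite, explicit list of discriminants for which $\mathrm{Cl}(K)$ has exponent dividing $2$; and (ii) Kuwata's classification from Table \ref{Table:Kuwata}, which shows that any elliptic K3 with no reducible fibers and geometric $\MW$-rank $18$ arises, up to isogeny, as a base change $X^{(n)}$ with $n\in\{5,6\}$ of the Shioda--Inose surface over $\Km(E\times E')$ with both factors CM. The main obstacle, and the heart of the proof, is to show that for each admissible $d$ the generators of $\MW(X^{(n)})$ --- which come from divisors tied to CM endomorphisms and isogenies of $E,E'$ --- can never all be defined over $\Q$ simultaneously; descent of these eighteen classes forces the presence of the CM field $K\neq\Q$ in the field of definition of at least one generator, dropping the $\Q$-rank to at most $17$.

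For the existence of a rank-$17$ example, I would apply Kuwata's construction to a pair $E,E'$ of $\Q$-isogenous non-CM elliptic curves defined over $\Q$ together with a $\Q$-rational isogeny $E\to E'$ of prime degree (for instance the conductor-$11$ isogeny class with its rational $5$-isogeny). The Shioda--Inose K3 surface $X/\Q$ has two fibers of Kodaira type $II^*$ which we place over $0$ and $\infty\in\Q$, so the quintic base change $t\mapsto t^5$ produces a K3 surface $X^{(5)}/\Q$. By Table \ref{Table:Kuwata}, since $E\sim E'$ without CM, the geometric Mordell--Weil rank of $X^{(5)}$ equals $17$. The remaining, largely routine, step is to verify that all $17$ independent sections descend to $\Q$: they arise from the graph of the $\Q$-rational $5$-isogeny $E\to E'$, its pullbacks under $t\mapsto t^5$, and translations by $\Q$-rational torsion, all of which are Galois-invariant by construction, delivering an elliptic K3 surface over $\Q$ of Mordell--Weil rank exactly $17$.
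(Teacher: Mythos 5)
Your reduction of the upper bound to a singular K3 surface with all of $\NS$ defined over $\Q$ is the correct first step, and Theorem \ref{thm:NS} is indeed the tool that bounds the discriminant --- but note that with $L=\Q$ it gives $\Q(\sqrt d)\supset H(d)$, i.e.\ \emph{class number one} and hence only the 13 listed discriminants, not the weaker ``exponent dividing $2$'' condition (that condition governs when the surface merely has a model over $\Q$, not when all of $\NS$ is $\Q$-rational). From there, however, your argument does not close. Table \ref{Table:Kuwata} records the fiber configurations and ranks of one specific tower $X^{(n)}$ of fibrations; it is not a classification of all jacobian fibrations without reducible fibers on a singular K3 surface, so you cannot assert that a hypothetical rank-18 fibration arises as some $X^{(5)}$ or $X^{(6)}$. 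Worse, the proposed contradiction --- that the eighteen sections ``can never all be defined over $\Q$'' because they are tied to CM data --- is circular: under your hypothesis every section \emph{is} $\Q$-rational, and the Galois-theoretic information has already been exhausted in restricting $d$ to the 13 class-number-one values. The actual contradiction in Elkies' proof is lattice-theoretic, not Galois-theoretic: since \eqref{eq:ST} forces the absence of reducible fibers, the Mordell--Weil lattice $M$ would be an even positive-definite lattice of rank $18$ and discriminant $|d|\leq 163$ with no roots \cite{ShMW}; by Nikulin's results \cite{N} it would embed primitively into a Niemeier lattice with a rootless rank-6 partner lattice $L$ in the orthogonal complement, and for each of the 13 discriminants one verifies that no such primitive embedding exists. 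This non-existence check (the paper notes such a lattice would even break known sphere-packing density records) is the heart of the proof and is entirely absent from your proposal.

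The existence half is also not established. The claim that the $17$ geometric sections of $X^{(5)}$, for a $\Q$-isogenous non-CM pair $E\sim E'$, are ``Galois-invariant by construction'' is unsupported: the curves generating $\NS(\Km(E\times E'))$ and its covers are indexed by the $2$-torsion of $E$ and $E'$ and by isogeny data, and Galois generically permutes them, so getting rank $17$ \emph{over $\Q$} (rather than over $\bar\Q$) is precisely the difficult point. Elkies' actual example has discriminant $12\cdot 79$ and is produced by specialisation and lifting arguments within families of K3 surfaces, not by the base-change tower of Table \ref{Table:Kuwata}; see \cite{Elkies-rank}.
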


On the existence side of Theorem \ref{thm:17},
Elkies exhibits an explicit K3 surface of MW-rank 17 over $\Q$.
The construction uses families of K3 surfaces and ingenious specialisation and lifting arguments
(somewhat comparable to the concepts in \S\ref{s:mod}).
As an application, Elkies is able to specialise to elliptic curves over $\Q$ of even higher rank,
pushing the previous record ranks as far as 28.

In order to explain the non-constructive part of Elkies' proof, 
we have to discuss fields of definition of singular K3 surfaces first.

\section{Fields of definition of singular K3 surfaces}
\label{s:fields}

Since singular K3 surfaces lie isolated in the moduli space,
they are always defined over some number field.
In this section, we will be more specific about the number field
and also comment on known obstructions.

So far, we have not given explicit equations for singular K3 surfaces
except for the Fermat quartic (Ex.~\ref{ex:Fermat}).
To remedy this,
consider a singular K3 surface $X$ with elliptic fibration coming from the Shioda-Inose structure.
By Tate's algorithm \cite{Tate},
the two fibers of type $II^*$ determine the Weierstrass equation of $X$ almost completely.
The remaining coefficients depend on Weber functions in terms of the j-invariants of the elliptic curves,
as determined by Inose \cite{Inose}:
\begin{eqnarray}
\label{eq:Inose}
X:\;\;\; y^2 = x^3 - 3 A t^4 x + t^5 (t^2 - 2  B t+1)
\end{eqnarray}
where $ A^3 = j(E)\,j(E')/12^6, B^2 = (1-j(E)/12^3)\,(1-j(E')/12^3).$
An easy twist reveals that the above fibration in fact admits a model over $\Q(j(E), j(E'))$ (see \cite[Prop.~4.1]{S-fields}).
This field is related to class field theory as follows.
Let $d$ denote the discriminant of $X$,
i.e.~$d=-\det Q(X)$ where $Q(X)$ is the quadratic form representing $T(X)$.
If $K=\Q(\sqrt{d})$, then class group theory attaches to $d$ a ring class field $H(d)$
as  maximal abelian extension of $K$ with prescribed ramification.
The Galois group of the extension $H(d)/K$ is canonically isomorphic to the class group $Cl(d)$,
consisting of primitive quadratic forms of discriminant $d$ as in \eqref{eq:Q} with Gauss composition.
CM theory of elliptic curves states that
\[
H(d) = K(j(E')) = K(j(E),j(E')).
\]
In summary we find
\begin{Proposition}
\label{prop:H(d)}
Any singular K3 surface of discriminant $d$ admits a model over the ring class field $H(d)$.
\end{Proposition}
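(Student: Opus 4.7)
The plan is to reduce to an explicit Weierstrass model provided by the Shioda--Inose construction, and to identify its coefficient field with $H(d)$ via the main theorem of complex multiplication.

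First, using the Shioda--Mitani theorem (Theorem~\ref{thm:SM}) I would attach to the transcendental form $Q(X) = \begin{pmatrix} 2a & b \\ b & 2c \end{pmatrix}$ (in primitive representation) the two elliptic curves $E, E'$ of \eqref{eq:E}, so that $T(E \times E') \cong Q(X)$. A direct computation shows that $a\tau$ and $\tau'$ satisfy monic integral polynomials of discriminant $d = b^2 - 4ac$, and (using primitivity of $Q$) that the endomorphism rings $\mathrm{End}(E) = \Z[a\tau]$ and $\mathrm{End}(E') = \Z[\tau']$ coincide with the order $\OO_d$ of discriminant $d$ in $K = \Q(\sqrt{d})$. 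The main theorem of complex multiplication then places $j(E), j(E') \in H(d)$, giving $\Q(j(E), j(E')) \subseteq H(d)$, with equality as stated in the text.

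Next, apply the Shioda--Inose construction of \S\ref{s:SI} to $E \times E'$: this produces a K3 surface $\widetilde X$ whose transcendental lattice satisfies $T(\widetilde X) \cong T(\Km(E \times E'))[1/2] \cong T(E \times E') \cong T(X)$. The Torelli theorem for singular K3 surfaces (Theorem~\ref{thm:Torelli}) then yields $X \cong \widetilde X$ over $\C$, so it suffices to descend $\widetilde X$ to $H(d)$. For this I would invoke Inose's explicit Weierstrass equation \eqref{eq:Inose}, whose coefficients $A, B$ satisfy $A^3 = j(E)j(E')/12^6$ and $B^2 = (1-j(E)/12^3)(1-j(E')/12^3)$, and hence a priori live only in a radical extension of $\Q(j(E), j(E'))$. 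A Weierstrass twist $x \mapsto \lambda^{-2}x$, $y \mapsto \lambda^{-3}y$ with a suitably chosen $\lambda$ absorbs the requisite cube and square roots and yields an isomorphic model with coefficients in $\Q(j(E), j(E')) = H(d)$, as worked out in \cite[Prop.~4.1]{S-fields}.

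The main obstacle I expect is the CM-theoretic bookkeeping in the first step: one must verify that $\mathrm{End}(E)$ and $\mathrm{End}(E')$ are exactly the order $\OO_d$ and not an order of smaller conductor, which would otherwise place the $j$-invariants in a larger ring class field than $H(d)$. This is precisely where the assumption that $Q$ is a primitive representative of its $\mathrm{SL}(2,\Z)$-class is used. Everything else is formal: the Shioda--Inose construction and the Torelli identification are already available from \S\ref{s:SI}, and the descent of the Inose equation to $\Q(j(E), j(E'))$ is an elementary twist argument. Combining the three ingredients produces the desired model of $X$ over $H(d)$.
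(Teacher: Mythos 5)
Your proposal follows essentially the same route as the paper: Shioda--Mitani produces $E,E'$ with $\Q(j(E),j(E'))\subseteq H(d)$ by CM theory, the Shioda--Inose construction together with the Torelli theorem identifies $X$ with the surface given by Inose's equation \eqref{eq:Inose}, and a twist descends that equation to $\Q(j(E),j(E'))$, exactly as in \cite[Prop.~4.1]{S-fields}. One small correction: the ``obstacle'' you flag is harmless --- if $Q(X)$ is imprimitive (as happens, e.g., for Kummer surfaces), then $\mathrm{End}(E)$ is an order of \emph{smaller} conductor, whose ring class field is \emph{contained} in $H(d)$, so the inclusion $\Q(j(E),j(E'))\subseteq H(d)$, which is all the proposition requires, still holds.
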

As satisfying as the above canonical field of definition may be,
in practice it is often far from optimal.
As an illustration we return to the Fermat quartic once again.

\begin{Example}
\label{ex:descent}
In Example \ref{ex:Fermat2} we have given the transcendental lattice of the Fermat quartic $S$.
In the realm of Shioda-Inose structures,
$S$ thus arises from the elliptic curves with periods $\tau=\sqrt{-1}, \tau'=4\sqrt{-1}$.
Note the discrepancy that the latter CM curve is only defined over $\Q(\sqrt{2})$
(and so is Inose's model \eqref{eq:Inose})
while the Fermat quartic has the obvious model over $\Q$.
In fact, considering the Fermat quartic as a Kummer surface instead,
it is associated to the elliptic curves with periods $\tau=\sqrt{-1}, \tau'=2\sqrt{-1}$
which are indeed both defined over $\Q$.
\end{Example}

Example \ref{ex:descent} leads to the problem of working out 
obstructions for singular K3 surfaces to descend from the ring class field $H(d)$ to smaller fields,
in particular to $\Q$.
Essentially there are two obstructions known,
one coming from the transcendental lattice (see \cite{S-fields}),
the other imposed by the N\'eron-Severi lattice.
Here we shall only consider the latter obstruction
which was discovered independently by Elkies and the author (cf.~\cite{S-NS}).
In short, it states that the ring class field $H(d)$ is essentially preserved through the Galois action on 
the N\'eron-Severi lattice:
\begin{Theorem}[Elkies, Sch\"utt]
\label{thm:NS}
Let $X$ be a singular K3 surface of discriminant $d$.
If $\NS(X)$ has generators defined over some number field $L$,
then 
\[
L(\sqrt d)\supset H(d).
\]
\end{Theorem}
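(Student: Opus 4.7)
The plan is to recover the ring class field $H(d)$ from the Galois action on $\NS(X)$ by combining the Shioda--Inose sandwich of Section \ref{s:SI} with classical CM theory applied to the associated elliptic curves.

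First I would invoke the Shioda--Inose structure: associated to $X$ is a degree-$2$ rational map $X \dashrightarrow \Km(A)$ with $A = E \times E'$, where $E, E'$ are elliptic curves with complex multiplication by an order of discriminant $d$, so that by the main theorem of complex multiplication $\Q(j(E), j(E')) = H(d)$. Geometrically this fibration on $X$ is singled out by a divisor $D$ of Kodaira type $II^*$ together with a suitable section $C$, both of which are classes in $\NS(X)$.

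By hypothesis every class in $\NS(X)$ is $G_L$-invariant; in particular $D$ and $C$ are $L$-rational. Hence the induced jacobian elliptic fibration, the associated Nikulin involution, and the Kummer quotient $Y = \Km(A)$ all descend to $L$. A Kummer surface defined over $L$ determines the underlying abelian surface up to at most a quadratic twist over $L$. The arithmetic heart of the argument is to identify this twist field as $L(\sqrt{d})$: this is forced by the determinant of the Galois representation of $G_L$ on $T(X) \otimes \Q_\ell$, which acts orthogonally on a rank-$2$ lattice of discriminant $d$ and hence has determinant equal to the quadratic character of $K = \Q(\sqrt{d})$ over $\Q$. Consequently any twist of $A$ inherited from $Y$ becomes trivial after adjoining $\sqrt{d}$.

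Once a model of $A = E \times E'$ over $L(\sqrt{d})$ is in hand, the $j$-invariants $j(E)$ and $j(E')$ --- being insensitive to twisting --- must themselves lie in $L(\sqrt{d})$. Since they generate $H(d)$ over $\Q$,
\[
H(d) \;=\; \Q(j(E), j(E')) \;\subset\; L(\sqrt{d}),
\]
as required.

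The principal technical obstacle is the twist-identification step: controlling the quadratic ambiguity introduced by passing through the Kummer quotient and pinning the twist field down as $L(\sqrt{d})$ via the determinant character on $T(X) \otimes \Q_\ell$. A subsidiary issue is verifying that $L$-rationality of the entire lattice $\NS(X)$ really makes the Nikulin involution and the Shioda--Inose map descend to $L$; while this should be a direct consequence of the explicit description in Section \ref{s:SI}, the verification is technical and is the point where one must distinguish the $L$-rationality of \emph{classes} from the $L$-rationality of chosen representative divisors.
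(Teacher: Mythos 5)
First, a caveat on the comparison: the survey does not actually prove Theorem \ref{thm:NS} --- it states the result and refers to \cite{S-NS} (crediting Elkies independently) --- so there is no in-paper proof to match line by line. The published route is organized differently from yours: it combines the main theorem of \cite{S-fields}, namely that Galois conjugation permutes singular K3 surfaces compatibly with the action of $Cl(d)\cong\mathrm{Gal}(H(d)/K)$ on the classes in the genus of $T(X)$, with the observation that triviality of the $G_L$-action on $\NS(X)$ forces every $\sigma$ fixing $L(\sqrt{d})$ to preserve the oriented isometry class of $T(X)$ (the $\sqrt{d}$ absorbing precisely the orientation ambiguity), whence $\sigma$ fixes $H(d)$. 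Your plan --- descending the Shioda--Inose geometry itself --- is a legitimate alternative in the spirit of Elkies' argument, but as written it has two genuine defects.

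The step you call the ``arithmetic heart'' does no work: quadratic twists do not change $j$-invariants, so pinning down the twist field of $A$ is irrelevant to showing $j(E),j(E')\in L(\sqrt{d})$. If the Kummer quotient descends to $L$ at all --- and it does, but this needs the Torelli theorem rather than the $L$-rationality of the classes of $D$ and $C$: the Nikulin involution $\iota$ is symplectic, hence trivial on $T(X)$, while $G_L$ is trivial on $\NS(X)$, so $\sigma\iota\sigma^{-1}$ and $\iota$ agree on $H^2$ and therefore coincide --- then $\NS(\Km(A))$ is also generated over $L$, the two isotrivial fibrations of the double Kummer pencil are defined over $L$, and their constant $j$-invariants already lie in $L$, twist or no twist. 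The place where your argument actually breaks is the final identification $H(d)=\Q(j(E),j(E'))$: this is false. One has $H(d)=K(j(E'))=\Q(\sqrt{d},j(E),j(E'))$, and $\Q(j(E'))$ is a real field of degree $h(d)$, whereas $H(d)$ contains the imaginary quadratic field $K$. Combined with the previous point, your chain would establish $H(d)\subseteq L$, i.e.\ $\sqrt{d}\in L$ for every splitting field $L$ of $\NS(X)$, which fails in known examples (the class-number-two surfaces over $\Q$ have $\NS$ split by a real quadratic field). The $\sqrt{d}$ in the theorem enters exactly in passing from $\Q(j(E),j(E'))$ to the ring class field $K(j(E),j(E'))$, not through any untwisting of $A$; likewise, the determinant of an orthogonal representation on a rank-2 lattice of discriminant $d$ is not automatically the character of $K$ --- that identification requires the CM structure on $T(X)\otimes\Q$. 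Once you justify the descent via Torelli and replace your last display by $H(d)=\Q(\sqrt{d},j(E),j(E'))\subseteq L(\sqrt{d})$, the argument closes.
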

The theorem paves the way towards finiteness classifications.
For instance, one readily deduces that there are only finitely many singular K3 surfaces over $\Q$
(up to $\bar\Q$-isomorphism)
since the N\'eron-Severi lattice cannot admit Galois actions by arbitrarily large groups.
To see this, note that the rank of the lattice is trivially constant and 
the hyperplane class is always preserved by Galois;
this leaves a faithful Galois action on the negative-definite complement.

\subsection{Mordell-Weil ranks over $\Q$}

Returning to the problem of Mordell-Weil ranks over $\Q$,
we note that rank 18 would imply that all of $\NS$ would be defined over $\Q$.
By Theorem \ref{thm:NS}, the discriminant $d$ could only have class number one, i.e.~
\[
d=-3,-4,-7,-8,-11,-12,-16,-19,-27,-28,-43,-67,-163.
\]
Elkies continues to argue with the Mordell-Weil lattice $M$ of the given elliptic fibration.
By \cite{ShMW}, this is an even positive-definite lattice 
of rank 18 and discriminant $d$ without roots (i.e.~$x^2>2$ for all $x\in M$).
Note that with discriminant $|d|\leq 163$
such a lattice would break the known density records for sphere packings,
but this can only be regarded as evidence against Mordell-Weil rank 18 over $\Q$.

In order to rule out the existence of such a $M$,
one appeals to general results of lattice theory developed by Nikulin \cite{N}.
Geared towards  elliptic K3 surfaces,
they imply  that  $M$ admits a primitive embedding into some Niemeier lattice
(i.e.~one of the 24 unimodular positive-definite even lattices of rank 24, cf.~\cite{Nie}).
Its orthogonal complement $L$ (positive-definite of rank 6)
is called the partner lattice of $M$.
Often the partner lattice $L$ can be determined a priori from $\NS$ or from the transcendental lattice $T(X)$.
In fact, all jacobian elliptic fibrations on a given complex K3 surface $X$ can be classified
in terms of the primitive embeddings of $L$ into Niemeier lattices.
Full classifications have been established, for instance, by Nishiyama \cite{Nishi}
for the singular K3 surfaces with discriminant $-3$ and $-4$.
Here there are 6 resp.~13 jacobian fibrations, but
the Mordell-Weil rank does never exceed 1.

Returning to the problem of  Mordell-Weil rank 18,
ruling this out for a single K3 surface
amounts to proving that
the partner lattice $L$ does not admit a primitive embedding into any Niemeier lattice
without any perpendicular roots.
For the 13 discriminants of class number one,
this is exactly what Elkies succeeds in doing,
thus ruling out Mordell-Weil rank 18 over $\Q$.
In fact, even Mordell-Weil rank 17 is only barely attained over $\Q$
by an elliptic K3 surface of discriminant $12\cdot 79$ (see \cite{Elkies-rank}).

\section{Modularity of singular K3 surfaces}
\label{s:mod}

The modularity of elliptic curves over $\Q$ has been one of the biggest achievements in mathematics in
the last 20 years (implying in particular Fermat's Last Theorem).
Starting from an elliptic curve $E$ over $\Q$, one considers the reductions $E_p$ for all good primes.
Counting points on $E_p$ over $\F_p$, one defines the quantity
\begin{eqnarray}
\label{eq:Eap}
a_p = 1+p-\# E_p(\F_p).
\end{eqnarray}
The Taniyama-Shimura-Weil conjecture states that the collection of integers $\{a_p\}$ comprise the Fourier coefficients
of a single modular form (the eigenvalues of a Hecke eigenform of weight $2$).
This is now a theorem, thanks to the work of Wiles \cite{Wiles} and others.
Historically the first case to be settled comprised elliptic curves with CM.
Generally over any number field,
their zeta function was shown by Deuring \cite{Deuring} to depend on certain Hecke characters $\psi$.
For elliptic curves over $\Q$ with CM, this has an incarnation in terms of modular forms with CM.

For singular K3 surfaces, the Shioda-Inose structure allows one 
to express the zeta function in terms of the Hecke character $\psi^2$,
but a priori only over some extension of $H(d)$ required for exhibiting the construction \cite[Thm.~6]{SI}.
Especially in view of the possible descent from $H(d)$ to some smaller fields as explored in \S\ref{s:fields},
it is thus an independent task to prove modularity for singular K3 surfaces over $\Q$
(long predicted by standard conjectures for 2-dimensional Galois representations).
Here \eqref{eq:Eap} is rephrased by virtue of Lefschetz' fixed point formula.
For a singular K3 surface $X$ over $\Q$ and its good reductions $X_p$, we define
\begin{eqnarray}
\label{eq:Xbp}
\#X_p(\F_p) = 1 + b_p + hp + p^2
\end{eqnarray}
where the integer $h$ encodes the action of Frob$_p^*$ on $\NS(X_p\otimes\bar\F_p)$.
In this context, the integers $b_p$ ought to belong to a Hecke eigenform of weight $3$.
This was solved by Livn\'e in a more general framework of orthogonal Galois representations \cite{L}:

\begin{Theorem}[Livn\'e]
\label{thm:mod}
Every singular K3 surface over $\Q$ is modular.
Its zeta function is expressed in terms of a Hecke eigenform of weight 3
with CM by $\Q(d)$ where $d$ denotes the discriminant of the K3 surface.
\end{Theorem}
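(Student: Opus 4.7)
The plan is to attach to $X$ a 2-dimensional $\ell$-adic Galois representation, analyse its structure via the Shioda-Inose construction and the CM theory of the associated elliptic curves, and then use Livn\'e's theorem on orthogonal 2-dimensional Galois representations to recognise it as that of a weight 3 Hecke eigenform with CM.

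First I would extract the relevant representation. Since $\rho(X)=20$, the $\ell$-adic cohomology decomposes Galois-equivariantly as
\[
H^2_\text{\'et}(\bar X, \Q_\ell) = (\NS(\bar X)\otimes \Q_\ell) \oplus V_\ell,
\]
with $V_\ell$ a 2-dimensional piece of weight $2$. Lefschetz' fixed point formula, combined with \eqref{eq:Xbp}, identifies $hp$ with the trace of Frob$_p^*$ on $\NS\otimes\Q_\ell$ and $b_p$ with the trace on $V_\ell$. Hence modularity of $X$ is equivalent to matching the degree 2 $L$-function $L(V_\ell, s)$ with that of a weight 3 newform.

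Next I would use the Shioda-Inose structure to make the CM visible. After base change to $H(d)$, the construction of \S\ref{s:SI} and Proposition \ref{Prop:Inose} relate $X$ by degree-2 rational maps to $\Km(E\times E')$ with $E\sim E'$ CM elliptic curves as in \eqref{eq:E}. Combined with $T(X)\cong T(E\times E')$, this shows that $V_\ell|_{G_{H(d)}}$ is abelian and corresponds to the square of the Deuring Hecke character $\psi$ of $E$. In particular, restricted to $G_K$ for $K=\Q(\sqrt d)$ the CM field of $E$, the representation is already an abelian character of infinity type $(2,0)$, and $V_\ell$ becomes abelian after a solvable base change. In addition, the cup product pairing is Galois-equivariant on $V_\ell(1)$, so $V_\ell$ takes values in the orthogonal group of a non-degenerate rank 2 form; equivalently $\det V_\ell = \chi_\ell^2\,\epsilon_K$, where $\chi_\ell$ is the cyclotomic character and $\epsilon_K$ the quadratic character attached to $K/\Q$. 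The decisive step is now Livn\'e's theorem on 2-dimensional orthogonal $\ell$-adic representations of $G_\Q$ whose restriction to some $G_K$ with $K$ imaginary quadratic is abelian: it forces $V_\ell$ to be induced from a Hecke grossencharacter $\tilde\psi$ of $K$. The Hodge-Tate weights $\{0,2\}$ of $V_\ell$ at $\ell$ constrain $\tilde\psi$ to have infinity type $(2,0)$, whereupon Hecke's classical theta-series construction identifies $L(V_\ell, s)$ with the $L$-function of a cuspidal weight 3 newform $f$ with CM by $K$, as desired.

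The main obstacle is precisely this last descent step. Recognising the abelian representation over $G_{H(d)}$ is essentially bookkeeping with the Shioda-Inose diagram and Deuring's theorem, but assembling the information into a single induced representation of $G_\Q$ cannot be done directly because the Shioda-Inose correspondence itself is typically defined only over the ring class field $H(d)$ and need not descend further (cf.\ Example \ref{ex:descent}). Livn\'e's orthogonality criterion replaces this missing geometric descent by a purely representation-theoretic argument using only the orthogonal structure and the determinant of $V_\ell$, both of which are available on $X$ over $\Q$.
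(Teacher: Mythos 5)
Your proposal is correct and follows essentially the same route the paper sketches: isolate the $2$-dimensional transcendental Galois representation, use the Shioda-Inose structure and Deuring's theorem to exhibit the Hecke character $\psi^2$ over (an extension of) $H(d)$, and invoke Livn\'e's theorem on motivic orthogonal $2$-dimensional representations \cite{L} to effect the descent to $\Q$ -- noting, as you do, that the orthogonal structure on the transcendental piece is what replaces the missing geometric descent of the Shioda-Inose correspondence. The paper offers no proof beyond this outline and the citation, so your write-up is a faithful expansion of its argument.
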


\begin{Example}
The zeta function of the Fermat quartic (the model $S$ over $\Q$ from Ex.~\ref{ex:Fermat})
can be expressed in terms of the eta product $\eta(4\tau)^6$ 
(the weight 3 eigenform of level 16 from \cite[Table 1]{S-CM}).
With Legendre symbols $\chi_\bullet$ accounting for the Galois action on the lines defined over $\Q(\sqrt{-1},\sqrt 2)$,
one finds
\[
\zeta(S,s)=\zeta(s) \zeta(s-1)^5 \zeta(\chi_{-1},s-1)^3 \zeta(\chi_2,s-1)^6 \zeta(\chi_{-2},s-1)^6 L(\eta(4\tau)^6,s) \zeta(s-2).
\]
Spelling this out for $\#S_p(\F_p)$ as in \eqref{eq:Xbp},
we find $h=5+3\chi_{-1}(p) + 6(\chi_2(p)+\chi_{-2}(p))$ and Fourier coefficients $b_p$ determined by the infinite product
\[
\eta(4\tau)^6 = q \prod_{n\geq 1} (1-q^{4n})^6 = \sum_{n\geq 1} b_nq^n.
\]
\end{Example}

We emphasize how the overall picture differs from the case of elliptic curves over $\Q$.
There Shimura had shown in the 50's how to associate 
an elliptic curve over $\Q$ as a factor of the Jacobian $J_0(N)$
to a Hecke eigenform of weight 2 and level $N$ with eigenvalues in $\Z$,
but the modularity remained open for more than 30 years.
For singular K3 surfaces, quite on the contrary, modularity was proved first, but
only a few years ago it became clear that 
they also sufficed in the opposite direction:

\begin{Theorem}[Elkies-Sch\"utt {\cite{ES}}]
\label{thm:ES}
Every known Hecke eigenform of weight 3 with eigenvalues in $\Z$
is associated to a singular K3 surface over $\Q$.
\end{Theorem}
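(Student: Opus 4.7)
The plan is to proceed by a finite classification followed by explicit construction and verification. By the classical theory of CM modular forms (Hecke, Shimura, Ribet), every weight $3$ Hecke eigenform with rational Fourier coefficients arises from a Hecke character $\psi$ of an imaginary quadratic field $K = \Q(\sqrt{-N})$ of infinity type $(2,0)$. The rationality of the eigenvalues forces strong arithmetic constraints on the class group of the order, the conductor of $\psi$, and the central character, producing a concrete finite list of such newforms organized by level. The first step is therefore to tabulate the known forms, grouped by the CM field $K$ and the ring class field that the character descends from.

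Second, for each form $f$ on the list I would construct an explicit singular K3 surface $X/\Q$ realising it. The starting point is Livn\'e's Theorem \ref{thm:mod}: any singular K3 surface over $\Q$ that realises $f$ must have transcendental discriminant $d$ with $\Q(\sqrt{d}) = K$, and by Theorem \ref{thm:NS} the ring class field $H(d)$ must already be contained in $L(\sqrt{d})$ for $L$ a field of definition of $\NS(X)$, which in the $L=\Q$ case severely constrains $d$. For each admissible $d$, the Torelli Theorem \ref{thm:Torelli} identifies the target surface up to $\bar\Q$-isomorphism via its quadratic form $Q$, and I would then realise it by choosing isogenous CM elliptic curves $E,E'$ with $T(E\times E')$ represented by $Q$ (or by $2Q$, if one is going via Kummer) and applying the Shioda-Inose construction through Inose's pencil \eqref{eq:Inose}. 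A priori this yields a model over $H(d)$ by Proposition \ref{prop:H(d)}; one then descends to $\Q$ by combining suitable quadratic twists with the rationality of the Kummer model \eqref{eq:2Q}, and when needed one switches to Kuwata's base-change surfaces $X^{(n)}$ of Table \ref{Table:Kuwata} to shift the transcendental lattice by the factor $n$ and hit the required discriminant.

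Third, I would verify the match $f_X = f$. By Theorem \ref{thm:mod}, the constructed $X/\Q$ is automatically attached to some weight $3$ CM eigenform $f_X$; to pin this down, compute the integers $b_p$ from point counts via \eqref{eq:Xbp} for a set of small primes of good reduction, controlling the contribution $h$ via the explicit Galois action on the generators of $\NS(X)$ (which, being of small rank and defined over $H(d)$, is essentially known from the construction). Agreement with the Fourier coefficients of $f$ on sufficiently many primes---a Sturm-style bound depending on level and nebentypus---forces $f_X = f$.

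The main obstacle is the construction step: not every $f$ on the list can be realised by a plain Shioda-Inose model over $\Q$, because the default rationality field is $H(d)$ rather than $\Q$ and Theorem \ref{thm:NS} obstructs naive descent whenever the $2$-part of the class group fails to act trivially on the orthogonal complement of the polarisation in $\NS$. Handling these cases requires a case-by-case interplay of quadratic twists, the switch between Shioda-Inose and Kummer models (which differ by a factor of $2$ on the transcendental lattice), and occasionally an \emph{ad hoc} alternative geometric model (e.g.\ a different elliptic fibration on the same surface, or a quartic/double-sextic realisation), while simultaneously ensuring that no eigenform on the target list is left unmatched and no two constructions are redundant.
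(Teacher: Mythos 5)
Your first and third steps match the paper's strategy (the reduction via the classification of CM newforms with rational eigenvalues in \cite{S-CM} to exhibiting one singular K3 surface over $\Q$ per imaginary quadratic field of class group exponent $\leq 2$, and the verification by point counting against the finitely many candidate forms). The gap is in your second step, and it is exactly the central difficulty of the theorem. You propose to realise each form by the Inose pencil \eqref{eq:Inose} attached to CM curves $E,E'$ and then ``descend to $\Q$ by combining suitable quadratic twists.'' This cannot work for most of the 65 fields: the natural field of definition is $H(d)$, which for class groups $Cl(d)\cong(\Z/2)^k$ is a multiquadratic extension of $\Q$ of degree $2^{k+1}$ (up to $32$ in the list), and a quadratic twist only moves the model within a quadratic extension. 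Worse, there is a structural obstruction: Theorem \ref{thm:NS} forces any model over $\Q$ to carry a faithful Galois action by a group of order at least $h(d)$ on $\NS$, whereas on the Inose pencil the rank-$18$ sublattice spanned by the two $II^*$ fibers and the zero section is Galois-trivial (type $II^*$ admits no component permutations), so the whole action would have to live on the rank-$2$ Mordell--Weil part, whose isometry group is at most dihedral of order $12$. Already for $Cl(d)\cong(\Z/2)^3$ this is impossible, so for the larger discriminants the Inose pencil (and likewise Kuwata's $X^{(n)}$, which only rescale $T(X)$ by $n$ and do not change the relevant field) simply has no model over $\Q$ with its distinguished fibration. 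You flag this as ``the main obstacle,'' but the remedies you offer (twists, switching to the Kummer model, ad hoc alternative models) do not amount to a method.

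The paper's actual route around this is substantively different: Elkies and Sch\"utt construct one-parameter families over $\Q$ of K3 surfaces with $\rho\geq 19$ carrying jacobian elliptic fibrations with many reducible fibers that Galois is allowed to permute (so that a large class group \emph{can} act on $\NS$), locate candidate CM specialisations by sieving point counts modulo several primes against the Fourier coefficients of the target form and lifting the matching residues to a parameter of small height, and then \emph{prove} $\rho=20$ at that parameter by exhibiting an extra section whose height and fiber-component intersection pattern are predicted by the discriminant of the target form via the Mordell--Weil height pairing. Your proposal omits this last point entirely: in your setup the surface is singular by construction, but once the construction is replaced by a specialisation argument, establishing $\rho=20$ over $\bar\Q$ becomes the decisive step, and it is not a consequence of matching finitely many $b_p$.
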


In view of Theorem \ref{thm:mod} it is crucial to note that the CM property is by no means special,
but rather forced in odd weight by real Hecke eigenvalues due to a result of Ribet \cite{Ribet}.
Then the key step towards Theorem \ref{thm:ES} consists in a classification of CM form with eigenvalues in $\Z$
which narrows the problem down to an essentially finite problem \cite{S-CM}.
In practice it therefore suffices to exhibit a singular K3 surface over $\Q$ for any imaginary quadratic field
whose class group has exponent 1 or 2.
There are 65 such fields known, with at most one further field possible by \cite{Wb}.
Thus the restriction of Theorem \ref{thm:ES} which, for instance, becomes vacuous 
if one is willing to assume the extended Riemann hypothesis for odd real Dirichlet characters.

We conclude this paper with a few words towards to construction of these singular K3 surfaces over $\Q$ in \cite{ES}.
After exhausting the examples occurring in the literature,
we started considering 1-dimensional families of K3 surfaces with $\rho\geq 19$.
These have dense specialisations with $\rho=20$ over $\bar\Q$,
parametrised by some modular or Shimura curve,
but the determination of the CM points is a non-trivial task.
For instance, one can deform the Fermat quartic to the so-called Dwork pencil
\begin{eqnarray}
\label{eq:Dwork}
S_\lambda = \{x_0^4+x_1^4+x_2^4+x_3^4=\lambda x_0x_1x_2x_3\}\subset\PP^3.
\end{eqnarray}
This retains a big part of the automorphism group which partly explains why $\rho(S_\lambda)\geq 19$.
We will briefly come back to this family below;
for a detailed account, the reader is referred to the upcoming paper \cite{ES-family}.

A common theme among the constructions is the use of jacobian elliptic fibration (again!),
preferably with large contribution from the singular fibers 
(such that ideally the MW-rank would be zero generically).
This has both advantages on the constructive side of exhibiting the families
and for the decisive step of provably determining CM points (see below).
Note that by Theorem \ref{thm:NS},
a singular K3 surface of big class number over $\Q$ has to admit a fairly big Galois action on $\NS$,
so the families necessarily become pretty complicated (despite the low MW-rank).

Finally we comment on the possible approaches to determine the CM points of a 
given family such as the Dwork pencil \eqref{eq:Dwork}.
One possibility to proceed would be to determine the moduli structure of the parametrising curve
and compute the CM points explicitly. 
For instance, the parameter of the Dwork pencil can be interpreted as 4-fold cover
of the modular curve $X^*(2)$ parametrising pairs of 2-isogenous elliptic curves 
(where the CM-points can be calculated easily).
Geometrically this can be achieved through a Shioda-Inose structure over $\Q$.
However, with the families getting more and more complicated, the determination of the moduli curve soon becomes infeasible (let alone the calculation of CM points),
and in fact these 1-dimensional families of K3 surfaces have proved a versatile tool
to work out explicit models of Shimura curves (see \cite{E-Shimura}).
Here is how we proceeded instead in \cite{ES}.

Experimentally one can count points over finite fields 
throughout the family and apply  Lefschetz' fixed point formula \eqref{eq:Xbp}.
Fixing a target modular form, one can sieve those parameters modulo several primes $p$
which would fit the Fourier coefficients $b_p$ of the modular form.
Once a collection of residue parameters is computed, one can try to lift them to a single parameter
of small height in $\Q$.
This procedure is surprisingly effective (though not for the Dwork pencil in the above form,
because it admits an order 4 symmetry in the parameter $\lambda$).
It remains to prove the CM points that one might have computed experimentally.

Proving the CM points amounts to exhibiting an extra divisor on the special member of the family.
Here
one takes advantage of the structure as jacobian elliptic surface,
together with an extra bit of information extracted from Theorem \ref{thm:mod}.
Namely the precise modular form that we are aiming at
predicts the square class of the discriminant of the K3 surface $X$
that we want to prove to have $\rho=20$.
Presently this usually requires the presence of an additional section.
Thanks to the height pairing from the theory of Mordell-Weil lattices \cite{ShMW},
information about the conjectural discriminant can be translated into details
which fiber components have to be met by the section.
If the height of the section is small enough,
this gives enough information about the shape of the section 
to solve for it either directly or with $p$-adic methods
(further simplified by the fact that we have already a candidate for the parameter of the family).

In the end, it turns out that the two obstructions -- the families being rich enough
and the height of the section being small enough to allow for explicit calculations --
balance out just right to compute singular K3 surfaces for all but a handful of the known Hecke eigenforms
(which require special treatment).

\medskip

We end the paper by remarking
that the analogous problem in higher dimension,
realising all Hecke eigenforms of fixed weight with eigenvalues in $\Z$ by a single class of varieties such as Calabi-Yau manifolds, seems wide open even in dimension 3 where some modularity results are available
(see Yui's introductory lectures).

\subsection*{Acknowledgements}

It is a great pleasure to thank the other organisers of the Fields workshop
and particularly all the participants
for creating such a stimulating atmosphere.
Special thanks to the Fields Institute for the great hospitality
and to the referee for his comments.
These lecture notes were written down while the author enjoyed
support from the ERC under StG 279723 (SURFARI)
which is gratefully acknowledged.

\end{document}